\renewcommand{\email}[2][]{%
  \ifx\emails\@empty\relax\else{\g@addto@macro\emails{,\space}}\fi%
  \@ifnotempty{#1}{\g@addto@macro\emails{\textrm{(#1)}\space}}%
  \g@addto@macro\emails{#2}%
}
\renewcommand{\le}{\leqslant}
\renewcommand{\ge}{\geqslant}
\renewcommand{\leq}{\leqslant}
\renewcommand{\geq}{\geqslant}
\renewcommand{\phi}{\varphi}
\newcommand{\defeq}{\coloneqq}
\renewcommand{\emptyset}{\varnothing}
\newcommand{\Z}{\mathbb{Z}}
\newcommand{\R}{\mathbb{R}}
\newcommand{\M}{\mathcal{M}}
\newcommand{\mtra}{^{\raisebox{-.45ex}{\scalebox{.7}[1.0]{$-$}}\mkern-1mu\intercal}}
\newcommand{\tra}{^{\mkern-.2mu\intercal}}
\newcommand{\ip}[2]{\langle{#1},{#2}\rangle}
\newcommand{\set}[2]{\bigl\{#1 \bigm\vert #2\bigr\}}
\newcommand{\0}{\boldsymbol{0}}
\newcommand{\ee}{\boldsymbol{e}}
\newcommand{\conv}{\mathrm{conv}}
\newcommand{\stab}{\mathrm{STAB}}
\newcommand{\rk}{\mathrm{rk}}
\newcommand{\cone}{\mathop{\mathrm{cone}}}
\newcommand{\polylog}{\mathop{\mathrm{polylog}}}
\newtheorem{thm}{Theorem}
\newtheorem{lem}[thm]{Lemma}
\newtheorem{clm}[thm]{Claim}
\newtheorem{cor}[thm]{Corollary}
\newtheorem{rem}[thm]{Remark}
\theoremstyle{definition}
\newtheorem*{rep@theorem}{\rep@title}
\newcommand{\newreptheorem}[2]{%
\newenvironment{rep#1}[1]{%
 \def\rep@title{#2\ \ref{##1}}%
 \begin{rep@theorem}\itshape}
 {\end{rep@theorem}}}
\begin{document}

\title{Bounds on the number of 2-level polytopes, cones and configurations}


\author{Samuel Fiorini\textsuperscript{1}}
\author{Marco Macchia\textsuperscript{2}}
\address[1,2]{Universit\'{e} libre de Bruxelles}
\email[1,2]{\{sfiorini,mmacchia\}@ulb.ac.be}

\author{Kanstantsin Pashkovich\textsuperscript{3}}
\address[3]{C \& O Department, University of Waterloo, Waterloo, Canada}
\email[3]{kpashkov@uwaterloo.ca}

\begin{abstract}
We prove an upper bound of the form $2^{O(d^2 \polylog d)}$ on the number of affine (resp.\ linear) equivalence classes of, by increasing order of generality, $2$-level $d$-polytopes, $d$-cones and $d$-configurations. This in particular answers positively a conjecture of Bohn et al.\ on $2$-level polytopes~\cite{Bohn17}. We obtain our upper bound by relating affine (resp.\ linear) equivalence classes of $2$-level $d$-polytopes, $d$-cones and $d$-configurations to faces of the correlation cone.

We complement this with a $2^{\Omega(d^2)}$ lower bound, by estimating the number of nonequivalent stable set polytopes of bipartite graphs. 
\end{abstract}

\maketitle

\section{Introduction}

For a positive integer $d$, consider the set $\M_d$ of $0/1$-matrices with rank-$d$ and no repeated row or column. How many matrices in $\M_d$ are \emph{maximal}, in the sense that they are not a submatrix of another matrix in $\M_d$? We show that $\M_d$ has surprisingly few maximal elements up to permutations of rows and columns, see Theorem~\ref{thm:refined_upper_bound_slack_matrices}. 
This is achieved by parametrizing these matrices by graphs on $n = O(d \log d)$ nodes, whose edges have integer weights in $[0,n^2]$.

\subsection{Motivation}

Matrices with $0/1$-entries are studied in communication complexity. This field is concerned with quantifying the amount of information that two or more parties have to exchange in order to collaboratively evaluate a function~\cite{Kushilevitz96}.

We consider the simplest communication model with two parties, that of deterministic protocols~\cite{Yao79}. It is well known that the \emph{deterministic communication complexity} $\mathrm{CC}(f)$ of a Boolean function $f : X \times Y \to \{0,1\}$ can be lower bounded by the base-$2$ logarithm of the rank of its \emph{communication matrix} $M(f)$:
\[
\mathrm{CC}(f) \geqslant \log\rk(M(f))\,.
\]
A fundamental open problem known as the \emph{log-rank conjecture}~\cite{LovaszSaks93} states that this simple bound is almost tight for all Boolean functions $f$, in the sense that
\[
\mathrm{CC}(f) \leqslant \polylog \rk(M(f))\,.
\]

Proving the log-rank conjecture in the special case where $M(f)$ is a maximal element of $\M_d$ with $d \coloneqq \rk(M(f))$ is enough to imply the whole conjecture, because deterministic communication complexity is monotone under taking submatrices, and invariant under repeating rows or columns. In this sense, maximal elements of $\M_d$ are the hardest instances for the log-rank conjecture. To our knowledge, this fact has not been used in the communication complexity literature. Our hope is that the succinct representation of such maximal matrices as weighted graphs introduced in this paper can be used in a new approach to the log-rank conjecture. 

Matrices with $0/1$-entries also appear in discrete geometry and discrete optimization. Let $P = \set{x \in \R^d}{Ax \ge b} = \conv(V)$ be a polytope, where $A\in\R^{m\times d}$ has $m$ rows denoted by $A_i$, $i \in [m] \coloneqq \{1,\ldots,m\}$ and $V\subseteq \R^d$ has $n$ points denoted by $v_j$, $j \in [n] = \{1,\ldots,n\}$. The $m \times n$ nonnegative matrix $S$ whose $(i,j)$-entry is the slack of the $j$-th vertex $v_j$ with respect to the $i$-th inequality $A_i x \ge b_i$, that is,  $S_{ij} \coloneqq A_i v_j - b_i$, is referred to as a \emph{slack matrix} of $P$. A similar definition holds for polyhedral cones.

A polytope $P$ (or polyhedral cone $K$) is \emph{$2$-level} provided that it admits a slack matrix with $0/1$-entries~\cite{Gouveia10}. 

Some examples of $2$-level polytopes in the literature are Hanner polytopes~\cite{Hanner56}, Birkhoff polytopes~\cite{Birkhoff46}, or more generally, polytopes of the form $P = \set{x \in [0,1]^d}{Ax = b}$ for some $A$ totally unimodular\footnote{A matrix $M$ is said to be \emph{totally unimodular} provided that the determinant of every square submatrix of $M$ is either $0$,$1$ or $-1$, see for instance~\cite{Schrijver86}.} and $b$ integral, order polytopes~\cite{Stanley86}, and stable set polytopes of perfect graphs~\cite{Chvatal75}.

It is an open problem to determine what function of $d$ describes the number of affine equivalence classes of $2$-level $d$-polytopes.
In~\cite{Bohn17}, it is conjectured that this number is at most $2^{\mathrm{poly}(d)}$. This conjecture is backed by experimental evidence: Bohn et al.~\cite{Bohn17} enumerate all affine equivalence classes of $2$-level $d$-polytopes for dimension $d \leq 7$. (We point out that the enumeration algorithm has been since then improved\footnote{The latest implementation of the code is due to Samuel Fiorini, Marco Macchia, Aur\'elien Ooms (Universit\'e libre de Bruxelles). The source code and the complete list of all slack matrices of non-isomorphic $2$-level polytopes up to dimension $8$ are available online at \texttt{https://github.com/ulb/tl}\,.} and produced the complete database up to $d = 8$.)

Further related work is that of Grande and Ru{\'e}~\cite{Grande15b}, who provide a $O(c^d)$ lower bound on the number of 2-level matroid $d$-polytopes, for some constant $c > 0$. Finally, in~\cite{Gouveia16}, Gouveia \emph{et al.} completely classify polytopes with minimum positive semidefinite rank (which generalize $2$-level polytopes) in dimension $d=4$.

We point out that if the log-rank conjecture holds, every $2$-level $d$-polytope can be described as the projection of a polytope with at most $2^{\polylog(d)}$ facets. This implication follows from a classic result of Yannakakis~\cite{Yannakakis91} linking the \emph{extension complexity} of a polytope to the \emph{nonnegative rank} of its slack matrices. It is known that the extension complexity of the stable set polytopes of a perfect graph is at most $2^{\polylog(d)}$~\cite{Yannakakis91}, but whether or not the same proof strategy can be generalized to the entire class of $2$-level $d$-polytopes is still open.

\subsection{Contribution and outline}

We phrase the counting problem for $0/1$-matrices in terms of counting so called \emph{$2$-level configurations}, that we formally define at the beginning of Section~\ref{sec:preliminary}. Basically, a $2$-level configuration is a rank factorization of maximal matrix in $\mathcal{M}_d$. These configurations also capture (maximal) slack matrices of $2$-level cones and $2$-level polytopes.

In Section~\ref{sec:preliminary}, we introduce the notion of \emph{linear equivalence} for $2$-level configurations. Intuitively, two $2$-level configurations are linearly equivalent if they are two rank factorizations of the same matrix in $\mathcal{M}_d$. Moreover, we show that given some $2$-level $d$-polytope $P$, we can associate it to a $2$-level $(d+1)$-configuration. Similarly, a $2$-level $d$-cone $K$ can be associated to a $2$-level $d$-configuration.

In Section~\ref{sec:lower_bound}, we present a lower bound of $2^{\Omega(d^2)}$ for the number of affine equivalence classes of $2$-level $d$-polytopes, that implies a lower bound for the number of linear equivalence classes of $2$-level $d$-configurations.

Moreover, in Section~\ref{sec:naive_upper_bound}, we prove a first upper bound of $2^{O(d^3)}$ for the number of linear equivalence classes of $2$-level $d$-configurations.
Next, in Section~\ref{sec:upper_bound_cones} we present our first main result:

\begin{thm}\label{thm:second_upper_bound}
The total number of affine equivalence classes of $2$-level $d$-polytopes and the total number of linear equivalence classes of $2$-level $d$-cones is at most~$2^{O(d^2\log d)}$.
\end{thm}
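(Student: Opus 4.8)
The plan is to bound the number of linear equivalence classes of $2$-level $d$-configurations, since the theorem reduces to this by the association described in the excerpt (a $2$-level $d$-polytope gives a $2$-level $(d+1)$-configuration, and $d \mapsto d+1$ does not change the asymptotic form $2^{O(d^2\log d)}$). The key object is the parametrization announced in the introduction: each maximal matrix in $\M_d$ (equivalently, each linear equivalence class of $2$-level $d$-configurations) is to be encoded by a graph on $n = O(d\log d)$ nodes whose edges carry integer weights in $[0, n^2]$. So the core of the argument is to exhibit such an encoding and then simply count the possible encodings.

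First I would recall the connection to the correlation cone advertised in the abstract: a $2$-level configuration corresponds to a face of the correlation cone, and I would use this to produce, for each configuration, a set of at most $n = O(d\log d)$ generating vectors. The crucial dimension-reduction step is to show that although a $2$-level configuration a priori involves exponentially many rows and columns, a maximal slack matrix of rank $d$ can be pinned down by a bounded amount of combinatorial data. Concretely, I would argue that the linear equivalence class is determined by a weighted graph $G$ on $n = O(d\log d)$ vertices: the vertices correspond to a small generating set, and the integer edge weights in $[0,n^2]$ record the pairwise ``correlation'' values that encode the inner-product (slack) structure. The $[0,n^2]$ range should follow from the fact that these weights are bounded quantities determined by the $0/1$ structure on $n$ objects.

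Once the encoding is in place, the counting is immediate: the number of graphs on $n$ labelled vertices with edge weights in $\{0,1,\dots,n^2\}$ is at most $(n^2+1)^{\binom{n}{2}} = 2^{O(n^2 \log n)}$. Substituting $n = O(d\log d)$ gives
\[
2^{O(n^2 \log n)} = 2^{O\left((d\log d)^2 \log(d\log d)\right)} = 2^{O(d^2 \polylog d)},
\]
which is the bound of the form claimed in the abstract; a slightly more careful accounting of the exponents should tighten the $\polylog d$ factor to the single $\log d$ claimed in the theorem statement. I would then remark that since distinct linear equivalence classes receive distinct encodings (up to relabelling the $n$ vertices, which contributes only a further $n! = 2^{O(n\log n)} = 2^{O(d\log^2 d)}$ factor that is absorbed), this count bounds the number of classes.

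The main obstacle I anticipate is the dimension-reduction step: proving that $n = O(d\log d)$ vertices genuinely suffice to reconstruct the entire (potentially exponential-size) configuration, and that the reconstructing weights are integers bounded by $n^2$. This is where the geometry of the correlation cone and the $2$-level (i.e.\ $0/1$-slack) hypothesis must be exploited in an essential way, rather than the bookkeeping counting step. I would expect the argument to hinge on showing that a generating set of size $O(d\log d)$ exists—plausibly via a probabilistic or VC-dimension-type sampling argument on the $0/1$-matrix, which is the natural source of the logarithmic factor—and that the face of the correlation cone is determined by the finitely many integer correlations among these generators.
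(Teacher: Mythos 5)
There is a genuine gap: your proposal proves the wrong bound. The encoding you build on---a graph on $n = O(d\log d)$ nodes with integer edge weights in $[0,n^2]$---is the paper's parametrization of \emph{general} $2$-level $d$-configurations, and counting those encodings yields $2^{O(n^2\log n)} = 2^{O(d^2\log^3 d)}$, which is the weaker Theorem~\ref{thm:refined_upper_bound}, not the claimed $2^{O(d^2\log d)}$. Your remark that ``a slightly more careful accounting of the exponents should tighten the $\polylog d$ factor to the single $\log d$'' is precisely where the argument breaks: once the ambient dimension is blown up from $d$ to $k = O(d\log d)$ (which, in the general setting, is forced because one needs roughly that many vectors of $B$ to generate the lattice $\Lambda(B)$), the face-counting step takes place in a cone of dimension $\Theta(k^2)$ and unavoidably produces $2^{O(k^2\log k)} = 2^{O(d^2\log^3 d)}$. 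No bookkeeping recovers the single $\log d$; indeed the paper states improving the bound for general configurations as an open problem.

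The missing idea is that polytopes and cones carry extra structure which makes the dimension reduction to $O(d\log d)$ nodes unnecessary. Via simplicial cores (Lemmas~\ref{lem:01-configuration-polytope} and~\ref{lem:01-configuration-cone}), a $2$-level $d$-polytope (resp.\ $d$-cone) yields a configuration $(A,B)$ with $A \subseteq \{0,1\}^{d+1}$ and, crucially, $B \subseteq \Z^{d+1}$ \emph{simultaneously and in the same dimension} $d+1$ (resp.\ $d$). For a general configuration one can make $A$ a set of $0/1$-vectors, or $B$, but not both at once---this is exactly why the general case resorts to the lattice argument and loses logarithmic factors. With both integrality properties in hand, the correlation cone is used directly (Theorem~\ref{thm:second_upper_bound_conf}): for integer $b$ and $x \in \{0,1\}^d$ one has $\ip{b}{x}(\ip{b}{x}-1)\ge 0$, so every $b \in B$ induces a face of the correlation cone $K$ of dimension $d(d+1)/2$, and $A$ is recovered as the set of $0/1$ points whose lifts $(xx\tra,x)$ lie on the common face. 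Each face of the pointed cone $K$ is determined by the sum of at most $d(d+1)/2$ of its extreme rays, i.e.\ by an integer vector with entries in $[0,d(d+1)/2]$, so $K$ has at most $2^{O(d^2\log d)}$ faces, which gives the theorem. Your proposal never isolates this simultaneous integrality of $A$ and $B$ in the original dimension, and without it the approach cannot reach the stated bound.
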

This theorem follows from upper bounding the number of faces of the \emph{correlation cone}~\cite[Chapter 5]{Deza09}. 

Finally, in Section~\ref{sec:improved_upper_bound} we refine the $2^{O(d^3)}$ bound of Section~\ref{sec:naive_upper_bound} and prove our second main result:

\begin{thm}\label{thm:refined_upper_bound}
The total number of linear equivalence classes of $2$-level $d$-configurations is at most~$2^{O(d^2\log^3(d))}$.
\end{thm}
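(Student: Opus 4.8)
The plan is to refine the naive $2^{O(d^3)}$ bound from Section~\ref{sec:naive_upper_bound} by exploiting more tightly the structure of a $2$-level $d$-configuration, which is a rank factorization of a maximal matrix in $\mathcal{M}_d$. Let me think about where the $2^{O(d^3)}$ bound loses room. That naive bound presumably arises from specifying $O(d)$ many $0/1$-vectors in $\R^d$, each requiring $d$ bits, across up to $2^{O(d)}$ objects—or more plausibly, from bounding the number of combinatorial types by the number of ways to choose $O(d^2)$ defining inequalities each living in a space of size $2^{O(d)}$. My goal is to replace the wasteful per-object cost of $d$ bits by something closer to $\mathrm{polylog}(d)$ bits, yielding $2^{O(d^2\,\mathrm{polylog}\,d)}$.

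\begin{proof}[Proof sketch]
First I would recall from the introduction that a maximal matrix $M \in \mathcal{M}_d$ is to be parametrized by a graph on $n = O(d\log d)$ nodes whose edges carry integer weights in $[0,n^2]$. The key structural step is to establish this parametrization: I would show that any $2$-level $d$-configuration can be recovered from (i) a choice of a spanning set of $n = O(d\log d)$ rows of $M$ that witness its rank and generate all other rows, together with (ii) a bounded-weight combinatorial datum recording how the remaining rows and columns are expressed in terms of this spanning set. The point is that, because the matrix is $0/1$ with rank $d$, each column lies in a lattice-like structure spanned by the chosen rows, so the coefficients needed to reconstruct it are integers confined to the polynomially bounded range $[0,n^2]$.

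\begin{enumerate}
\item Fix a rank factorization and select a submatrix of $n = O(d\log d)$ linearly spanning rows; the bound $n = O(d\log d)$ should follow from a counting/covering argument on $0/1$-vectors of rank $d$, since the number of distinct rows is $2^{O(d)}$ but a spanning set of logarithmic overhead suffices to separate them.
\item Encode the whole configuration as an edge-weighted graph on these $n$ nodes, with weights integers in $[0,n^2]$; argue that this graph determines the linear equivalence class, so that counting classes reduces to counting such weighted graphs.
\item Bound the number of such weighted graphs: there are $\binom{n}{2} = O(d^2\log^2 d)$ edges, and each weight is specified by $O(\log n) = O(\log d)$ bits, giving $2^{O(d^2\log^2 d \cdot \log d)} = 2^{O(d^2\log^3 d)}$ in total.
\end{enumerate}

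The hard part will be step~(1) together with the faithfulness claim in step~(2): I must prove that $O(d\log d)$ rows genuinely suffice to reconstruct the entire (possibly exponentially large) maximal matrix, and that the weighted graph is a \emph{complete} invariant of the linear equivalence class, not merely a necessary one. Establishing that the reconstruction weights stay in $[0,n^2]$—rather than growing with the number of rows—is where the $2$-level hypothesis must be used decisively, presumably via a total-unimodularity- or lattice-width-type argument controlling how $0/1$ combinations propagate. Once the encoding is shown to be both well-defined on equivalence classes and injective, the final arithmetic in step~(3) is immediate and yields the claimed $2^{O(d^2\log^3 d)}$ bound.
\end{proof}
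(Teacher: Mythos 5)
Your high-level plan --- select $O(d\log d)$ special vectors, encode everything else by polynomially bounded integer data, then count --- has the right shape (it matches the parametrization announced in the paper's introduction), but both of the steps you yourself flag as ``the hard part'' fail as described, and the ideas that make them work are absent. First, the property needed from the $n = O(d\log d)$ selected vectors is not that they linearly span $\R^d$ (for that, $d$ vectors suffice), nor that they ``separate'' rows: it is that they generate the same \emph{lattice} as $B$, so that every $b \in B$ has \emph{integer} coordinates with respect to them. The paper proves the bound $k \le d + d\log d$ by a determinant argument: start with $d$ linearly independent vectors of $B \subseteq \{0,1\}^d$ (after Lemma~\ref{lem:01-configuration-B}), whose lattice has determinant at most $d^d$, and observe that adjoining any $b \in B$ outside the current lattice divides the determinant by an integer factor at least $2$, so this can happen at most $d\log d$ times. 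Your ``counting/covering'' sketch yields no such bound and does not even identify integrality as the target property.

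Second, and more fatally, your encoding (ii) cannot work as stated. The integer coefficients expressing a lattice element in terms of a generating set are \emph{not} bounded by $n^2$ (they can be exponentially large), and in any case $A$ and $B$ may have $2^{\Omega(d)}$ elements, so ``recording how the remaining rows and columns are expressed'' is per-element data that cannot fit into a graph carrying only $O(n^2)$ weights. The paper never records such coefficients. Instead, after transporting the configuration to $\R^k$ via $\zeta(a) \coloneqq (\ip{a}{b_1},\ldots,\ip{a}{b_k})$ and $\phi(b) \coloneqq$ the lattice coordinates of $b$, it invokes the correlation-cone argument of Theorem~\ref{thm:second_upper_bound_conf}: since $\phi(B) \subseteq \Z^k$, each inequality $\ip{(b'(b')\tra,-b')}{z} \ge 0$ with $b' \in \phi(B)$ defines a face of the pointed cone generated by $\set{(xx\tra,x)}{x \in \{0,1\}^k}$, and any face of this cone of dimension $k(k+1)/2$ is determined by a sum of at most $k(k+1)/2$ of its $0/1$ extreme rays --- a single integer vector with entries in $[0,k(k+1)/2]$. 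That one vector is the ``weighted graph on $n$ nodes with weights in $[0,n^2]$''; the set $A' \subseteq \{0,1\}^k$ is then recovered as the set of \emph{all} $0/1$ points whose moment vector lies on that face, $A = \set{a}{\zeta(a) \in A'}$ is recovered at an extra cost of $2^{kd}$ choices for $b_1,\ldots,b_k$ (a term your count omits), and $B$ is recovered from $A$ by maximality --- no per-element data anywhere. Without the lattice-generation lemma and the face-counting mechanism, your outline does not yield the theorem.
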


\section{Preliminary definitions and results}
\label{sec:preliminary}

A (convex) polytope $P\subseteq \R^d$ is said to be \emph{$2$-level} if there is a finite system of linear inequalities\footnote{Throughout, we assume that systems of linear inequalities do not have repeated inequalities.} $Ax \geqslant b$ with $A \in \R^{m \times d}$, $b \in \R^m$ and a finite point set $V \subseteq \R^d$ such that
\begin{align}
\label{eq:descr}
&P = \set{x \in \R^d}{Ax \ge b} = \conv(V)\,, \\ \text{ and}\\
\label{eq:2-level}
&A_i v_j-b_i \in \{0,1\} \text{ for all } i\in [m] \text{ and } j\in [n]\,,
\end{align}
where $A_i$ denotes the $i$-th row of $A$ and $v_j$ the $j$-th point of $V$.

Let $P \subseteq \R^d$ be a full-dimensional $2$-level polytope. We call a pair $(Ax \ge b, V)$ \emph{a maximal pair of $2$-level descriptions for $P$} if $Ax \ge b$ and $V$ satisfy \eqref{eq:descr} and \eqref{eq:2-level} and are maximal with respect to these properties. Notice that the inequalities $\ip{\0_d}{x} \ge 0$ and $\ip{\0_d}{x} \ge -1$ are always part of $Ax \ge b$ if $(Ax \ge b, V)$ is a maximal pair of $2$-level descriptions for $P$. Since we assume that $P$ is full dimensional, every $2$-level polytope admits a maximal pair of $2$-level descriptions. We call a matrix $S = S(Ax \ge b, V) \in \R_{\ge 0}^{m \times n}$ a \emph{maximal slack matrix} of $P$ if $S_{ij} = A_i v_j - b_i$ for some maximal pair $(Ax \ge b, V)$ of $2$-level descriptions for $P$, where $A$ has $m$ rows and $V$ has $n$ points. 

Likewise, a (pointed, polyhedral) cone $K \subseteq \R^d$ is said to be \emph{$2$-level} if there is a finite system of homogeneous linear inequalities $Ax \ge \0$ with $A \in \R^{m \times d}$ and a finite vector set $V \subseteq \R^d$ such that $K = \set{x\in\R^d}{Ax \geqslant \0} = \cone(V)$ and $A_i v_j \in \{0,1\}$ for all $i\in [m]$ and $j\in [n]$. The notions of maximal pair of $2$-level descriptions and maximal slack matrix are defined similarly for cones as for polytopes. As in the case of polytopes, we assume cones to be full dimensional. This guarantees the existence of a maximal pair of $2$-level descriptions and a maximal slack matrix for each $2$-level cone. Although it plays no role in this paper, we remark that maximal slack matrices are unique for $2$-level polytopes, but not necessarily for $2$-level cones.

A \emph{$2$-level $d$-configuration} is a pair $(A,B)$, where $A\subseteq \R^d$ and $B\subseteq \R^d$ are two maximal sets of vectors linearly spanning $\R^d$ such that the inner product $\ip{a}{b} = a\tra b$ is in $\{0,1\}$ for all $a \in A$ and $b \in B$. 
Letting $S_{ab} \coloneqq  \ip{a}{b}$, we obtain a matrix $S = S(A,B) \in \R_{\ge 0}^{|A| \times |B|}$ that is the \emph{slack matrix} of the $2$-level $d$-configuration $(A,B)$. Two $2$-level $d$-configurations $(A,B)$ and $(A',B')$ are called \emph{linearly equivalent} if there exists a non-singular matrix $M\in \R^{d\times d}$ such that 
\[
A' = \set{M\mtra a}{a\in A}
\quad\text{and}\quad
B' = \set{M b}{b\in B}\,.
\]

The next remark follows from the fact that, for a $2$-level $d$-configuration $(A,B)$, the sets $A$ and $B$ are required to linearly span $\R^d$. 

\begin{rem}
For a $2$-level $d$-configuration $(A,B)$, the slack matrix $S(A,B)$ is a maximal element in $\M_d$.
\end{rem}

Let us first observe that $2$-level polytopes and $2$-level cones can be interpreted as special instances of $2$-level configurations.

\begin{lem}\label{lem:2L_cone_conf}
Consider a full-dimensional $2$-level cone $K = \set{x\in\R^d}{Ax \geqslant \0} = \cone(V)$, where $A\in \R^{m\times d}$, $V = \{v_1,\ldots,v_n\}$ and $(Ax \geqslant \0, V)$ is a maximal pair of $2$-level descriptions for $K$. Then the pair
\[
A'\coloneqq \set{A_i\tra }{i\in[m]}\quad\text{and}\quad B'\coloneqq \set{v_j}{j\in[n]}
\]
defines a $2$-level $d$-configuration.
\end{lem}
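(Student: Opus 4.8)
The plan is to verify the three defining properties of a $2$-level $d$-configuration for the pair $(A',B')$: that $A'$ and $B'$ each linearly span $\R^d$, that every cross inner product lies in $\{0,1\}$, and that $A'$ and $B'$ are maximal with respect to this latter property. The inner-product condition is immediate: for $a = A_i\tra \in A'$ and $b = v_j \in B'$ we have $\ip{a}{b} = A_i v_j \in \{0,1\}$ by the $2$-level property of the description $(Ax \ge \0, V)$.

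For the spanning conditions I would invoke the two standing hypotheses on $K$. Since $K = \cone(V)$ is full-dimensional, its linear span is all of $\R^d$; as $\mathrm{span}(V) = \mathrm{span}(\cone(V))$, the set $B' = V$ spans $\R^d$. For $A'$, I would use pointedness: the lineality space of $K = \set{x \in \R^d}{Ax \ge \0}$ is exactly $\ker A$, so $K$ pointed forces $\ker A = \{\0\}$, i.e.\ $\rk A = d$, whence the rows $A_i$, and therefore the vectors of $A'$, span $\R^d$.

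The heart of the argument, and the step I expect to be the main obstacle, is transferring maximality of the pair $(Ax \ge \0, V)$ to maximality of $(A',B')$. The key observation, special to the homogeneous (conical) setting, is that the $\{0,1\}$ condition already forces validity, in both directions. Indeed, if a vector $a$ satisfies $\ip{a}{v_j} \in \{0,1\}$ for every $j$, then in particular $a\tra v_j \ge 0$ for all $j$, so $a\tra x \ge 0$ holds on all of $\cone(V) = K$; symmetrically, if a vector $v$ satisfies $\ip{A_i\tra}{v} \in \{0,1\}$ for every $i$, then $Av \ge \0$, so $v \in \set{x \in \R^d}{Ax \ge \0} = K = \cone(V)$. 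Using this I would argue by contradiction. Suppose $(A',B')$ were not maximal, say some $a \notin A'$ could be adjoined to $A'$ while keeping all cross products in $\{0,1\}$. By the observation $a\tra x \ge 0$ is valid for $K$, so adjoining this inequality to $Ax \ge \0$ preserves both $\set{x \in \R^d}{Ax \ge \0} = K$ and the $2$-level property, contradicting maximality of the pair. The symmetric case of adjoining a point to $B'$ is handled identically via $v \in \cone(V)$.

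The only loose end I anticipate is matching ``adjoining a vector to $A'$'' with ``adjoining a non-repeated inequality to the system'': one must rule out that a new configuration vector coincides, as an inequality, with an existing row up to positive rescaling. I expect this to be routine, since the $\{0,1\}$ condition leaves little freedom for rescaling a row $A_i\tra$ (a positive multiple $\lambda A_i\tra$ can satisfy it only for $\lambda \in \{0,1\}$ unless $A_i = \0$), and a maximal description already contains the trivial inequality $\ip{\0}{x} \ge 0$. Granting this bookkeeping, the three checks combine to show that $(A',B')$ is a $2$-level $d$-configuration.
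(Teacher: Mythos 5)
Your proof is correct and takes essentially the same route as the paper's: spanning comes from full-dimensionality (for $B'$) and pointedness (for $A'$), and maximality of $(A',B')$ is transferred from maximality of the pair of $2$-level descriptions. The paper compresses this into two sentences, leaving implicit exactly the observation you make explicit --- that in the homogeneous setting $\{0,1\}$ inner products already force validity of $a\tra x \ge \0$ (resp.\ membership $v \in K$) --- and your rescaling bookkeeping is a harmless extra check.
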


\begin{proof}
Since $K$ is full-dimensional, it is enough to prove that both $A'$ and $B'$ are maximal. This holds because the set of linear inequalities $Ax \ge \0$ and the set $V$ are maximal with respect to the property $A_iv_j \in \{0,1\}$, where $A_i$ is the $i$-th row of the matrix $A$ and $v_j$ is the $j$-th vector in $V$ for $i \in [m]$, $j \in [n]$.
\end{proof}

\begin{lem} \label{lem:2L_poly_conf}
Consider a full-dimensional $2$-level polytope $P = \set{x \in \R^d}{Ax \ge b} = \conv(V)$, where $A\in \R^{m\times d}$, $b\in \R^m$, $V = \{v_1,\ldots,v_n\}$ and $(Ax \ge b, V)$ is a maximal pair of $2$-level descriptions for $P$. Then the pair
\[
A'\coloneqq \set{(A_i\tra ,b_i)}{i\in[m]}\quad\text{and}\quad B'\coloneqq \set{(v_j,-1)}{j\in[n]}\cup\{\0_{d+1}\}
\]
defines a $2$-level $(d+1)$-configuration.
\end{lem}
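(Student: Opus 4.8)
The plan is to check the three defining properties of a $2$-level $(d+1)$-configuration for the pair $(A',B')$ in turn: that all cross inner products lie in $\{0,1\}$; that each of $A'$ and $B'$ linearly spans $\R^{d+1}$; and that each of $A'$ and $B'$ is maximal with respect to the first property. The inner product condition is a one-line computation: for $a=(A_i\tra,b_i)$ and $b=(v_j,-1)$ we get $\ip{a}{b}=A_i v_j-b_i\in\{0,1\}$ by \eqref{eq:2-level}, and $\ip{a}{\0_{d+1}}=0$. For the spanning conditions I would invoke full-dimensionality of $P$. Since the $v_j$ affinely span $\R^d$, the lifts $(v_j,-1)$ linearly span $\R^{d+1}$, which settles $B'$ (adjoining $\0_{d+1}$ changes nothing). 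Since $P$ is a bounded full-dimensional polytope, its recession cone $\{w:A_iw\ge 0\text{ for all }i\}$ is trivial, so the rows $A_i$ span $\R^d$; together with the row $(\0_d,-1)$ coming from the inequality $\ip{\0_d}{x}\ge -1$ that always belongs to a maximal pair, this shows $A'$ spans $\R^{d+1}$.

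The heart of the argument is the two maximality claims, each of which I would reduce to maximality of the pair $(Ax\ge b,V)$. For $A'$, take any $(c,c_0)\in\R^{d+1}$ with $\ip{(c,c_0)}{b}\in\{0,1\}$ for all $b\in B'$. Testing against the vectors $(v_j,-1)$ gives $c\tra v_j-c_0\in\{0,1\}$ for every $j$, so $c\tra x\ge c_0$ is a valid inequality for $\conv(V)=P$ whose slacks at all $v_j$ lie in $\{0,1\}$. Adjoining it preserves both \eqref{eq:descr} and \eqref{eq:2-level}, so maximality of the inequality system forces $(c,c_0)=(A_i\tra,b_i)$ for some $i$; hence $(c,c_0)\in A'$ and $A'$ is maximal.

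For $B'$, the role of the two trivial rows becomes essential. Suppose $(w,w_0)\in\R^{d+1}$ satisfies $\ip{a}{(w,w_0)}\in\{0,1\}$ for every $a\in A'$. Testing against $(\0_d,-1)\in A'$ gives $-w_0\in\{0,1\}$, so $w_0\in\{0,-1\}$. If $w_0=-1$, testing against each $(A_i\tra,b_i)$ yields $A_iw-b_i\in\{0,1\}$; in particular $A_iw\ge b_i$ for all $i$, so $w\in P$ and hence $\conv(V\cup\{w\})=P$ with all slacks of $w$ in $\{0,1\}$. Maximality of $V$ then gives $w=v_j$ for some $j$, so $(w,w_0)\in B'$. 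If instead $w_0=0$, the same tests give $A_iw\in\{0,1\}$, so $A_iw\ge 0$ for all $i$, placing $w$ in the (trivial) recession cone of $P$; thus $w=\0_d$ and $(w,w_0)=\0_{d+1}\in B'$. In both cases $(w,w_0)\in B'$, so $B'$ is maximal.

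I expect the delicate point to be the $w_0=0$ branch above: it is the only case in which an admissible vector does not correspond to a genuine new vertex or inequality, and the right move is to read it as a recession direction and kill it using boundedness of $P$. Correspondingly, I would be careful with the bookkeeping around the degenerate vectors — the explicit $\0_{d+1}$ in $B'$ and the forced trivial rows $(\0_d,0)$ and $(\0_d,-1)$ in $A'$ — since these are precisely what make the two maximality reductions line up.
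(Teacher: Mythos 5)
Your proof is correct and follows essentially the same route as the paper's: maximality of $A'$ is reduced to maximality of the inequality system, and maximality of $B'$ is proved by using the row $(\0_d,-1)$ to force the last coordinate into $\{0,-1\}$, then invoking maximality of $V$ in one case and triviality of the recession cone of $P$ in the other. You are in fact slightly more complete than the paper, which leaves the spanning conditions and the $w_0=-1$ case implicit.
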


\begin{proof}
To show that $(A',B')$ is a $2$-level $(d+1)$-configuration, it is enough to show that both $A'$ and $B'$ are maximal. Since the system $Ax \ge b$ is maximal with respect to the property $A_iv-b_i \in \{0,1\}$ for all $i$, $j$, we obtain that $A'$ is maximal. Also, as we noticed before, the inequality $\ip{\0_{d}}{x}\geq -1$ is present in $Ax \ge b$.

In order to prove that $B'$ is maximal, we have to show that there exists no $u=(v,t)\in \R^{d+1}$ with $v\in \R^d$, $t\in \R$, $t\neq -1$ and $u\neq \0_{d+1}$, such that $\ip{a'}{u}\in\{0,1\}$ for all $a'\in A'$. Let us assume that there exists such a vector $u$.

From $(\0_{d},-1) \in A'$ we get that $t\in \{0,-1\}$ because $-t=\ip{(\0_{d},-1)}{u}\in\{0,1\}$.
Since $t\in \{0,-1\}$ and $t\neq -1$, we have $t=0$. Thus, $\ip{a'}{u}\in\{0,1\}$ for all $a'\in A'$ implies that $Av \in \{0,1\}^m$. In particular, $v$ is in the recession cone\footnote{Given a nonempty convex set $C \subseteq \R^d$, the \emph{recession cone} of $C$ is the set of all directions along which we can move indefinitely and still be in $C$, i.e. $\set{y \in \R^d}{x+\lambda y \in C,\,\forall x\in C,\,\forall \lambda \ge 0}$\,.} of $P$. Since $P$ is a polytope, its recession cone contains no vector besides $\0_d$. Hence, $v=\0_d$ and $u=\0_{d+1}$, a contradiction. 
\end{proof}

An alternative proof of Lemma~\ref{lem:2L_poly_conf} uses Lemma~\ref{lem:2L_cone_conf} and the fact that every $2$-level $d$-polytope naturally yields a $2$-level $(d+1)$-cone pointed at the origin. In fact, let $P$ be a $2$-level $d$-polytope $P = \set{x \in \R^d}{Ax \geqslant b} = \conv(V)$, where $A\in \R^{m\times d}$, $b\in \R^m$, $V = \{v_1,\ldots,v_n\}$ and $(Ax \geqslant b, V)$ is a maximal pair of $2$-level descriptions for $P$.
Let $A_i' \defeq (A_i\tra,b_i)$ for every $i \in [m]$ and $v_j' \defeq (v_j,-1)$ for every $j \in [n]$, $v'_{n+1}  \defeq \0_{d+1}$. Then it is straightforward to check that $K \defeq \cone(V') = \set{x \in \R^d}{A'x \ge \0}$ is a $2$-level $(d+1)$-cone pointed at the origin, where $V' \defeq \{v_1',\ldots,v_{n+1}'\}$ and $A'$ is the matrix whose rows are $A_i'$, $i \in [m]$. Moreover, $(A'x \ge \0,V')$ is a maximal pair of $2$-level descriptions for $K$.

We now show that $2$-level polytopes, cones, or configurations can be encoded using their maximal slack matrices, i.e.\ they can be encoded as some $0/1$-matrices. We show that this a valid encoding, i.e.\ given a $0/1$ maximal slack matrix we can reconstruct the original $2$-level polytope up to affine transformation, or reconstruct the $2$-level cone or $2$-level configuration up to linear transformation. We would like to note that the analogous statement holds for general polytopes and general pointed cones and slack matrices, which are not necessarily maximal.

\begin{lem}
If two $2$-level $d$-configurations $(A,B)$, $(A',B')$ admit the same slack matrix up to permutation of rows and columns, then $(A,B)$ and $(A',B')$ are linearly equivalent.
\end{lem}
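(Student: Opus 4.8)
The plan is to reconstruct the configuration from the slack matrix by exploiting the rank condition. The key insight is that a $2$-level $d$-configuration $(A,B)$ is essentially a rank factorization of its slack matrix $S(A,B)$. Since $S = S(A,B)$ is a matrix in $\M_d$, it has rank exactly $d$. If we stack the vectors of $A$ as rows of a matrix $\tilde{A} \in \R^{|A| \times d}$ and the vectors of $B$ as columns of a matrix $\tilde{B} \in \R^{d \times |B|}$, then $S = \tilde{A}\tilde{B}$ by the definition $S_{ab} = \ip{a}{b} = a\tra b$. Both $\tilde{A}$ and $\tilde{B}$ have rank $d$ because $A$ and $B$ each linearly span $\R^d$. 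So $(A,B)$ gives a rank-$d$ factorization of $S$, and likewise $(A',B')$ gives a rank-$d$ factorization of the same matrix $S$ (after permuting rows and columns to match).

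**Next I would** invoke the standard uniqueness of rank factorizations up to a change of basis. Suppose the rows of $\tilde{A},\tilde{A}'$ are indexed identically and the columns of $\tilde{B},\tilde{B}'$ are indexed identically after applying the row/column permutations that identify the two slack matrices, so that $\tilde{A}\tilde{B} = \tilde{A}'\tilde{B}' = S$. Since each factor has full column rank $d$ (for the left factor) or full row rank $d$ (for the right factor), there is a unique nonsingular $M \in \R^{d \times d}$ with $\tilde{A}' = \tilde{A} M\mtra$ and $\tilde{B}' = M\tilde{B}$. Concretely, because $\tilde{B}$ has a right inverse, $M$ is determined by $\tilde{B}' = M\tilde{B}$; one then checks that this same $M$ satisfies $\tilde{A}' = \tilde{A} M\mtra$ using $\tilde{A}\tilde{B} = \tilde{A}'\tilde{B}'$ together with the fact that $\tilde{B}$ (hence its right inverse action) determines everything on the column space. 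Reading this row by row, it says precisely that $a' = M\mtra a$ for each $a$ and $b' = Mb$ for each $b$ under the matching, which is exactly the linear equivalence defined in the excerpt.

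**The main obstacle** is handling the permutations and the matching of rows/columns carefully, and confirming that the correspondence is well-defined: a priori the slack matrix could have two equal rows or columns forcing an ambiguity, but since $S \in \M_d$ has no repeated rows or columns, the permutation identifying the two slack matrices is essentially determined on distinct entries, and each row of $\tilde{A}$ (resp.\ column of $\tilde{B}$) corresponds to a unique element of $A$ (resp.\ $B$). One must also verify that the degenerate zero vectors that may belong to $A$ or $B$ (as in the configuration coming from a polytope in Lemma~\ref{lem:2L_poly_conf}) are mapped correctly, but since $M\mtra \0 = \0$ and $M \0 = \0$, the zero vector is automatically consistent under any linear equivalence. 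Once the factorization framework is set up, the argument is routine; the genuine content is simply the observation that equality of slack matrices is equality of the factored products, and that full-rank factorizations are unique up to the action of $\mathrm{GL}_d(\R)$ with the transpose-inverse convention matching the definition of linear equivalence.
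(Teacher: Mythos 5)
Your proposal is correct and takes essentially the same route as the paper: the paper's proof is exactly the observation that $(A,B)$ and $(A',B')$ are two rank-$d$ factorizations of the common slack matrix, hence related by a nonsingular $M \in \R^{d\times d}$ realizing the linear equivalence, and you have simply filled in the cancellation and bookkeeping details the paper leaves implicit. (One immaterial notational slip: from $\tilde{B}' = M\tilde{B}$ and cancellation of $\tilde{B}$ one gets $\tilde{A}' = \tilde{A}M^{-1}$ as matrices, which read row-wise is $a' = M\mtra a$, exactly as required; your displayed matrix equation $\tilde{A}' = \tilde{A}M\mtra$ has an extra transpose, fixable by replacing $M$ with $M\tra$ throughout.)
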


\begin{proof}
Without loss of generality, we assume  $S(A,B) = S(A',B')$.
Both $(A,B)$ and $(A',B')$ provide rank factorizations of the matrix $S(A,B) = S(A',B')$ with rank $d$. Thus, there exists a non-singular matrix $M\in \R^{d\times d}$ such that \[
A'\coloneqq \set{M\mtra a}{a\in A}
\quad\text{and}\quad
B'\coloneqq \set{M b}{b\in B}\,.
\]
The claim follows.
\end{proof}

\begin{cor}
If two $2$-level polytopes admit the same maximal slack matrix up to permutation of rows and columns, then these polytopes are affinely equivalent. Similarly, if two $2$-level cones admit the same maximal slack matrix up to permutation of rows and columns, then these cones are linearly equivalent.
\end{cor}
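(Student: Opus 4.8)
The plan is to derive both statements from the preceding lemma on configurations, using the correspondences established in Lemmas~\ref{lem:2L_cone_conf} and~\ref{lem:2L_poly_conf}. The cone case is the more transparent one, so I would treat it first.

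For cones, let $K_1, K_2 \subseteq \R^d$ be two $2$-level cones sharing a common maximal slack matrix up to row/column permutation. Fixing maximal pairs of $2$-level descriptions, I would form via Lemma~\ref{lem:2L_cone_conf} the associated $2$-level $d$-configurations $(A'_1, B'_1)$ and $(A'_2, B'_2)$, where $B'_i$ is exactly the generator set $V_i$. Since the slack matrix of such a configuration coincides with the maximal slack matrix of the cone, the two configurations have the same slack matrix up to permutation, so the preceding lemma supplies a non-singular $M \in \R^{d \times d}$ with $B'_2 = \set{Mb}{b \in B'_1}$, that is, $V_2 = M(V_1)$. Then $K_2 = \cone(V_2) = M(\cone(V_1)) = M(K_1)$, so $K_1$ and $K_2$ are linearly equivalent.

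For polytopes, let $P_1, P_2 \subseteq \R^d$ be full-dimensional $2$-level polytopes with a common maximal slack matrix. Lemma~\ref{lem:2L_poly_conf} attaches to each a $2$-level $(d+1)$-configuration $(A'_i, B'_i)$ with $B'_i = \set{(v,-1)}{v \in V_i} \cup \{\0_{d+1}\}$. The slack matrix of $(A'_i, B'_i)$ is the maximal slack matrix of $P_i$ with one extra all-zero column, namely the column of $\0_{d+1}$; since the maximal slack matrices agree up to permutation, so do these configuration slack matrices. The preceding lemma then provides a non-singular $M \in \R^{(d+1) \times (d+1)}$ with $B'_2 = \set{Mb}{b \in B'_1}$.

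It remains to show that this linear equivalence of configurations descends to an affine equivalence of the polytopes, which I expect to be the only real obstacle. First, $M\0_{d+1} = \0_{d+1} \in B'_2$, and $M$ restricts to a bijection $B'_1 \to B'_2$; hence every lifted vertex $(v,-1)$, $v \in V_1$, is sent to some $(v',-1)$ with $v' \in V_2$. Writing $M$ in block form with bottom row $(M_{21}, M_{22})$, the requirement that the last coordinate of $M(v,-1)$ equal $-1$ for all $v \in V_1$ reads $M_{21}v - M_{22} = -1$; since $P_1$ is full-dimensional its vertices affinely span $\R^d$, which forces $M_{21} = \0$ and $M_{22} = 1$. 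Thus $M$ is block upper triangular with non-singular top-left block $M_{11}$, and it acts on the lifted vertices as $(v,-1) \mapsto (M_{11}v - M_{12}, -1)$. This is precisely the affine map $x \mapsto M_{11}x - M_{12}$ carrying $V_1$ onto $V_2$, and hence $P_1 = \conv(V_1)$ onto $P_2 = \conv(V_2)$. Therefore $P_1$ and $P_2$ are affinely equivalent.
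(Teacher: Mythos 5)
Your proposal is correct and follows exactly the route the paper intends: the paper states this corollary without proof, as an immediate consequence of the preceding lemma on configurations combined with Lemmas~\ref{lem:2L_cone_conf} and~\ref{lem:2L_poly_conf}. Your write-up simply supplies the details the paper omits, the only substantive one being the block-triangular argument showing that a linear equivalence of the lifted $(d+1)$-configurations (which must fix $\0_{d+1}$ and preserve the last coordinate $-1$, by full-dimensionality of $P_1$) descends to an affine equivalence of the $d$-polytopes.
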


The next lemma shows that given a $2$-level $d$-configuration $(A,B)$, we can find another $2$-level $d$-configuration $(A',B')$ with the same maximal slack matrix and where $A'$ is a set of $0/1$-vectors.

\begin{lem}\label{lem:01-configuration-A}
Given a $2$-level $d$-configuration $(A,B)$, there exists a $2$-level $d$-configuration $(A',B')$ such that $A' \subseteq \{0,1\}^d$ and $S(A,B) = S(A',B')$.
\end{lem}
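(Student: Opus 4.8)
The plan is to realize $(A',B')$ as a specific linear image of $(A,B)$, choosing the change of basis so that the vectors of $A$ are sent precisely onto the columns of the slack matrix, which are already $0/1$. Throughout I regard $A$ as the $|A|\times d$ matrix whose rows are the vectors $a\tra$ for $a\in A$, and $B$ as the $d\times|B|$ matrix whose columns are the vectors $b\in B$, so that $S\defeq S(A,B)=AB$ is a rank-$d$ factorization (the rank is $d$ since $A$ and $B$ span $\R^d$). As $S$ has rank $d$, I can pick $d$ linearly independent columns of $S$, indexed by a subset $J\subseteq B$ with $|J|=d$, and collect the corresponding vectors of $B$ into a $d\times d$ matrix $B_J$, so that the $0/1$ submatrix $S_{\cdot,J}$ equals $AB_J$.

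First I would check that $B_J$ is non-singular: if $B_Jz=\0$ for some $z\neq\0$, then $S_{\cdot,J}z=AB_Jz=\0$, contradicting the linear independence of the selected columns. I then set $M\defeq B_J^{-1}$ and define
\[
A'\defeq\set{M\mtra a}{a\in A}\quad\text{and}\quad B'\defeq\set{Mb}{b\in B}\,.
\]
By construction $(A',B')$ is linearly equivalent to $(A,B)$. Moreover $\ip{M\mtra a}{Mb}=a\tra M^{-1}Mb=\ip{a}{b}$ for all $a\in A$, $b\in B$, so every inner product is preserved; hence $(A',B')$ is again a $2$-level $d$-configuration and $S(A',B')=S(A,B)$.

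It remains to verify that $A'\subseteq\{0,1\}^d$, and this is where the choice of $M$ pays off: the vectors of $A'$, written as rows, form the matrix $AM^{-1}=AB_J=S_{\cdot,J}$, whose entries are $0/1$ because $S_{\cdot,J}$ is a submatrix of $S$. The only genuine content of the argument is the selection of $d$ independent columns and the invertibility of $B_J$; the rest is bookkeeping about how a linear equivalence acts on the two factor matrices and on their product. I do not anticipate a substantive obstacle, the one point demanding care being the transpose conventions, since $A$ transforms by $M\mtra$ while $B$ transforms by $M$, and it is exactly this pairing that leaves $S=AB$ unchanged while turning $A$ into the $0/1$ block $S_{\cdot,J}$.
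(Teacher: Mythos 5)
Your proof is correct and is essentially the paper's own argument: both select $d$ linearly independent vectors of $B$ (you do so via independent columns of $S$, the paper directly since $B$ spans $\R^d$), use them as a change of basis $M$, transform $A$ by $M\mtra$ and $B$ by $M$ so that all inner products are preserved, and conclude that $A'$ is $0/1$ because its coordinates are exactly entries of the slack matrix. Your identification of the row matrix of $A'$ with the submatrix $S_{\cdot,J}$ is just a matrix-form restatement of the paper's observation that $B'$ contains the standard basis vectors.
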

\begin{proof}
Since $B$ linearly spans $\R^d$, there are $d$ vectors $b_1, b_2, \ldots, b_d\in B$ which linearly span $\R^d$. Let $M\in \R^{d\times d}$ be the matrix whose $i$-th column equals $b_i$ for $i\in [d]$. Then the set
\[
B'\coloneqq \set{M^{-1} b}{b\in B}\,,
\]
contains standard basis vectors $\ee_1,\ldots, \ee_d$.

Moreover, the $2$-level $d$-configuration $(A',B')$, where \[
A'\coloneqq \set{M\tra a}{a\in A}\,,
\]
has the same maximal slack matrix as the $2$-level $d$-configuration $(A,B)$. 

Finally, since $S(A',B')$ is a $0/1$-matrix and the set $B'$ contains standard basis vectors $\ee_1,\ldots, \ee_d$, we have that $A'$ is a set of $0/1$-vectors.
\end{proof}

Analogously one can show the following lemma:

\begin{lem}\label{lem:01-configuration-B}
Given a $2$-level $d$-configuration $(A,B)$, there exists a $2$-level $d$-configuration $(A',B')$ such that $B' \subseteq \{0,1\}^d$ and $S(A,B) = S(A',B')$.
\end{lem}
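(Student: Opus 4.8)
The plan is to imitate the proof of Lemma~\ref{lem:01-configuration-A} verbatim, but with the roles of $A$ and $B$ interchanged. The definition of linear equivalence is symmetric in the two sets (one is transformed by $M\mtra$ and the other by $M$, and these are inverse-transposes of each other), and $A$ is required to span $\R^d$ just as $B$ is; so the same mechanism that turned $A$ into a set of $0/1$-vectors by fixing a basis inside $B$ should turn $B$ into a set of $0/1$-vectors by fixing a basis inside $A$.

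Concretely, since $A$ linearly spans $\R^d$, I would pick $d$ vectors $a_1,\ldots,a_d \in A$ that form a basis of $\R^d$, and let $N \in \R^{d\times d}$ be the matrix whose $i$-th column equals $a_i$, so that $N^{-1}a_i = \ee_i$. The delicate point is choosing the transformation matrix correctly: because the set $A$ transforms by $M\mtra$ (not by $M$), to arrange $M\mtra a_i = \ee_i$ one must take $M = N\tra$ rather than $M = N$. With this choice the linear equivalence yields $A' \coloneqq \set{M\mtra a}{a\in A} = \set{N^{-1}a}{a\in A}$, which contains the standard basis vectors $\ee_1,\ldots,\ee_d$, together with $B' \coloneqq \set{M b}{b\in B} = \set{N\tra b}{b\in B}$.

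Since linear equivalences preserve inner products, $S(A',B') = S(A,B)$; in particular $S(A',B')$ is a $0/1$-matrix. To conclude that $B' \subseteq \{0,1\}^d$, I would use that $A'$ contains $\ee_1,\ldots,\ee_d$: for any $b' \in B'$ the $i$-th coordinate satisfies $(b')_i = \ip{\ee_i}{b'} = S_{\ee_i b'} \in \{0,1\}$, so every coordinate of $b'$ lies in $\{0,1\}$ and hence $b' \in \{0,1\}^d$. I do not expect any genuine obstacle here; the only thing requiring care is the bookkeeping of inverse-transposes in the previous paragraph, which is precisely what makes this statement "analogous" rather than a literal copy of Lemma~\ref{lem:01-configuration-A}.
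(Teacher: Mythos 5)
Your proof is correct and is exactly the argument the paper intends: the paper proves this lemma by declaring it ``analogous'' to Lemma~\ref{lem:01-configuration-A}, and you have carried out that analogy faithfully, including the one nontrivial bookkeeping point (taking the equivalence matrix to be $N\tra$ so that $A'$ receives the basis vectors $\ee_1,\ldots,\ee_d$ and the $0/1$ slack entries then force $B' \subseteq \{0,1\}^d$).
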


Lemma~\ref{lem:01-configuration-A} can be strengthened for $2$-level configurations arising from $2$-level polytopes and $2$-level cones.

We recall the definition of simplicial core for polytopes, that appeared in \cite[Definition 1]{Bohn17}, and generalize it to cones.
A \emph{simplicial core} for a $d$-polytope $P$ is a $(2d+2)$-tuple $(F_1,\ldots,F_{d+1};$ $v_1,\ldots,v_{d+1})$ of facets and vertices of $P$ such that each facet $F_i$ does not contain vertex $v_i$ but contains vertices $v_{i+1}$, \ldots, $v_{d+1}$.
A \emph{simplicial core} for a $d$-cone $K$ is a $2d$-tuple $(F_1,\ldots,F_d;$ $v_1,\ldots,v_d)$ of facets and extreme rays of $K$ such that each facet $F_i$ does not contain extreme ray $v_i$ but contains extreme rays $v_{i+1}, \ldots, v_d$. The proof of next result follows from \cite[Lemma 9 and  Corollary 10]{Bohn17}. For the sake of completeness, we present another version here.

\begin{lem}\label{lem:01-configuration-polytope}
Given a $2$-level $d$-polytope $P$, there exists a $2$-level $(d+1)$-configuration $(C,D)$ such that $C\subseteq \{0,1\}^{d+1}$, $D\subseteq \Z^{d+1}$ and $S(C,D)$ is a maximal slack matrix of $P$.
\end{lem}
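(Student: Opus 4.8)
The plan is to feed the configuration produced by Lemma~\ref{lem:2L_poly_conf} through a single well-chosen change of basis, read off from a \emph{simplicial core} of $P$, in the same spirit as Lemma~\ref{lem:01-configuration-A}. First I would apply Lemma~\ref{lem:2L_poly_conf} to obtain the $2$-level $(d+1)$-configuration $(A',B')$ with $A' = \set{(A_i\tra,b_i)}{i\in[m]}$ and $B' = \set{(v_j,-1)}{j\in[n]}\cup\{\0_{d+1}\}$, whose slack matrix $S(A',B')$ is a maximal slack matrix of $P$. Everything then reduces to producing one invertible $M\in\R^{(d+1)\times(d+1)}$ for which $C \defeq \set{M\tra a}{a\in A'} \subseteq \{0,1\}^{d+1}$ and $D \defeq \set{M^{-1}b}{b\in B'} \subseteq \Z^{d+1}$. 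Since $\ip{M\tra a}{M^{-1}b} = a\tra M M^{-1} b = \ip{a}{b}$, the pair $(C,D)$ is then automatically a $2$-level $(d+1)$-configuration with the same slack matrix, because maximality and the spanning property are preserved under the invertible correspondence $a\mapsto M\tra a$, $b\mapsto M^{-1}b$.

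The matrix $M$ I would use comes from a simplicial core $(F_1,\ldots,F_{d+1};v_1,\ldots,v_{d+1})$ of $P$, which exists since $P$ is full-dimensional. Let $N$ be the matrix whose $k$-th column is the lift $(v_k,-1)$ of the $k$-th core vertex, and let $L$ be the matrix whose $i$-th row is the lift $(A_i\tra,b_i)$ of the $i$-th core facet. The crucial observation is that $T \defeq L N$ is exactly the $(d+1)\times(d+1)$ submatrix of the slack matrix indexed by the core facets and vertices: its $(i,k)$ entry is the slack of $v_k$ at $F_i$, which equals $1$ on the diagonal (since $F_i$ misses $v_i$) and $0$ strictly above it (since $F_i$ contains $v_{i+1},\ldots,v_{d+1}$). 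Hence $T$ is lower unitriangular, so $T$ and $T^{-1}$ are integer matrices with $\det T = 1$; in particular $\det L \cdot \det N = 1$, so $N$ is invertible and I may take $M \defeq N$.

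It then remains to verify the two containments. For $C$: the $k$-th coordinate of $N\tra a$ with $a=(A_i\tra,b_i)$ equals $\ip{(v_k,-1)}{(A_i\tra,b_i)}$, i.e.\ the slack of $v_k$ at $F_i$, which lies in $\{0,1\}$ by the $2$-level property; thus $C\subseteq\{0,1\}^{d+1}$, exactly as in Lemma~\ref{lem:01-configuration-A}. For $D$: from $T = LN$ I would rewrite $N^{-1} = T^{-1}L$ (this is a genuine identity since $T^{-1}LN = I$ and $N$ is square invertible). Then for any $b\in B'$ we have $N^{-1}b = T^{-1}(Lb)$, where $Lb$ is the vector of slacks of $b$ against the core facets, hence a $0/1$-vector when $b=(v_j,-1)$ and the zero vector when $b=\0_{d+1}$. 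As $T^{-1}$ is integral, $N^{-1}b\in\Z^{d+1}$, giving $D\subseteq\Z^{d+1}$.

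The main obstacle is precisely the integrality of $D$, which is where the simplicial core does the real work. The $0/1$-containment for $C$ is the same phenomenon already used in Lemma~\ref{lem:01-configuration-A} and needs no new idea; what is new is that the naive basis change there need not keep the vertices integral. The triangular incidence pattern of the simplicial core is exactly what forces $T = LN$ to be unitriangular and therefore unimodular, so that $T^{-1}$ remains integral and the identity $N^{-1} = T^{-1}L$ transports the integral $0/1$ slack columns $Lb$ to integral vectors $N^{-1}b$. I would therefore concentrate the care on justifying the entries of $T$ and the identity $N^{-1} = T^{-1}L$, the rest being bookkeeping.
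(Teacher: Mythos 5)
Your proof is correct and takes essentially the same approach as the paper: both apply Lemma~\ref{lem:2L_poly_conf} and then use a simplicial core of $P$ to build a unimodular change of coordinates that makes one side $0/1$ and the other integral. Your single matrix $N$ (columns the lifted core vertices) is exactly the composition of the paper's two successive transformations, and your $T = LN$ is precisely the paper's lower-triangular unimodular matrix $L$, so the two arguments coincide up to bookkeeping.
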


\begin{proof}
Let $(Ax \ge b,V)$ define a maximal pair of $2$-level descriptions for $P$, where $A \in \R^{m \times d}$ and $V \coloneqq \{v_1,\dots,v_n\}$.
By~\cite[Proposition 3.2]{GouvRobThom13}, \cite[Lemma 2]{Bohn17}, there exist $d+1$ facets $F_1,\ldots,F_{d+1}$ and $d+1$ vertices $v_1,\ldots,v_{d+1}$ such that $\Gamma \coloneqq (F_1,\ldots,F_{d+1};v_1,\ldots,v_{d+1})$ is a simplicial core for $P$. Let us assume that the facets $F_1,\ldots,F_{d+1}$ are defined by the linear inequalities $A_1 x \ge b_1, \ldots, A_{d+1} x \ge b_{d+1}$ respectively. By Lemma~\ref{lem:2L_poly_conf}, the pair of sets
\[
A'\coloneqq \set{(A_i\tra ,b_i)}{i\in[m]}\quad\text{and}\quad B'\coloneqq \set{(v_j,-1)}{j\in[n]}\cup\{\0_{d+1}\}
\]
defines a $2$-level $(d+1)$-configuration.

Define $M\in \R^{(d+1)\times(d+1)}$ to be the matrix whose $i$-th row is given by $M_i=(A_i, b_i)$. Due to the definition of simplicial core, the matrix $M$ is non-singular. Now, let
\[
A''\coloneqq \set{M\mtra a'}{a'\in A'}
\quad\text{and}\quad
B''\coloneqq \set{M b'}{b'\in B'}\,.
\]
Clearly, $(A'', B'')$ is a $2$-level $(d+1)$-configuration. Moreover, $B''\subseteq \{0,1\}^{d+1}$, since $(A',B')$ is a $2$-level configuration.

Let $b''_1,\ldots, b''_{d+1}$ be the vectors in $B''$ corresponding to the vertices $v_1,\ldots,v_{d+1}$ of $P$, respectively. Let $L\in \R^{(d+1)\times(d+1)}$ be the matrix with $i$-th column equal to $b''_i$. Since $\Gamma$ is a simplicial core, $L$ is a non-singular lower-triangular $0/1$ matrix, thus $L$ is unimodular.

Define
\[
C\coloneqq \set{L\tra a''}{a''\in A''}
\quad\text{and}\quad
D\coloneqq \set{L^{-1} b''}{b''\in B''}\,.
\]
As before, $(C, D)$ is a $2$-level $(d+1)$-configuration.
Since $L$ is a unimodular $0/1$-matrix and $B''\subseteq \{0,1\}^{d+1}$, we have that all vectors in $D$ are integral. Moreover, vectors $\ee_1,\ldots,\ee_{d+1}$ are in $D$, hence all vectors in $C$ are $0/1$-vectors. We conclude that $(C,D)$ is the desired $2$-level $(d+1)$-configuration.
\end{proof}

A result analogous to Lemma~\ref{lem:01-configuration-polytope} holds for $2$-level cones. Its proof follows the proof of the previous lemma and is left to the reader.

\begin{lem}\label{lem:01-configuration-cone}
Given a $2$-level $d$-cone $K$, there exists a $2$-level $d$-configuration $(C,D)$ such that $C\subseteq \{0,1\}^d$, $D\subseteq \Z^d$ and $S(C,D)$ is a maximal slack matrix for $K$.
\end{lem}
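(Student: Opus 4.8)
The plan is to mirror the proof of Lemma~\ref{lem:01-configuration-polytope}, with the simplifications that come from working with a cone rather than a polytope: there is no homogenizing coordinate, so the configuration lives in $\R^d$ rather than $\R^{d+1}$, and a simplicial core for a $d$-cone consists of $d$ facets and $d$ extreme rays instead of $d+1$ of each. Let $(Ax \ge \0, V)$ be a maximal pair of $2$-level descriptions for $K$, with $A \in \R^{m\times d}$ and $V = \{v_1,\dots,v_n\}$. By Lemma~\ref{lem:2L_cone_conf}, the pair $A' \coloneqq \set{A_i\tra}{i\in[m]}$ and $B' \coloneqq \set{v_j}{j\in[n]}$ is a $2$-level $d$-configuration.

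The key structural input I would invoke is the existence of a simplicial core $\Gamma \coloneqq (F_1,\dots,F_d; v_1,\dots,v_d)$ for $K$, analogous to the polytope case. First I would use the facet normals of $F_1,\dots,F_d$: let $M \in \R^{d\times d}$ be the matrix whose $i$-th row is $A_i$, where $A_i x \ge 0$ defines $F_i$. Because each facet $F_i$ misses extreme ray $v_i$ but contains $v_{i+1},\dots,v_d$, the matrix $M$ is non-singular. Applying the change of basis $A'' \coloneqq \set{M\mtra a'}{a'\in A'}$ and $B'' \coloneqq \set{M b'}{b'\in B'}$ yields a linearly equivalent $2$-level configuration $(A'',B'')$ with $B'' \subseteq \{0,1\}^d$ (since the slack entries $A_i v_j$ are precisely the coordinates of the transformed vectors and lie in $\{0,1\}$). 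Next I would straighten the extreme-ray side: let $b''_1,\dots,b''_d$ be the vectors in $B''$ corresponding to $v_1,\dots,v_d$, and let $L$ be the matrix with these as columns. The simplicial-core conditions force $L$ to be lower-triangular with $0/1$ entries and unit diagonal, hence unimodular. Setting $C \coloneqq \set{L\tra a''}{a''\in A''}$ and $D \coloneqq \set{L^{-1} b''}{b''\in B''}$ gives a $2$-level $d$-configuration in which $D$ is integral (because $L^{-1}$ is integral and $B''$ is $0/1$) and $\ee_1,\dots,\ee_d \in D$; the latter forces $C \subseteq \{0,1\}^d$, since $\ip{c}{\ee_k} = c_k \in \{0,1\}$ for each coordinate.

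Throughout, the transformations preserve the slack matrix up to the action of the non-singular matrices $M$ and $L$, so $S(C,D)$ equals $S(A',B') = S(K)$, a maximal slack matrix for $K$. The main point that requires care — and the only place where the cone argument genuinely differs from the polytope argument rather than being a notational relabeling — is establishing that a simplicial core exists for every $2$-level $d$-cone; in the polytope case this was cited from \cite{GouvRobThom13,Bohn17}, and one must check that the corresponding statement (or a direct triangulation argument using full-dimensionality and maximality of the descriptions) carries over to cones. Once the simplicial core is in hand, the two successive unimodular changes of basis and the verification that $C$ becomes $0/1$ and $D$ becomes integral are routine and identical in spirit to the polytope case, which is exactly why the authors leave the details to the reader.
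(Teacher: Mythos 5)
Your proposal is correct and is precisely the paper's intended argument: the paper's own proof of Lemma~\ref{lem:01-configuration-cone} consists of the single remark that it ``follows the proof of the previous lemma and is left to the reader,'' and your write-up is exactly that adaptation (simplicial core with $d$ facets and $d$ extreme rays instead of $d+1$, the change of basis $M$ built from the facet normals making $B''\subseteq\{0,1\}^d$, then the unimodular lower-triangular $L$ built from the core rays making $D$ integral and forcing $C\subseteq\{0,1\}^d$ via $\ee_1,\ldots,\ee_d\in D$). Your flagged point about existence of simplicial cores for cones is handled the same way as in the polytope case, by induction on a facet $F_1$ and an extreme ray $v_1\notin F_1$, lifting each facet of $F_1$ to a facet of $K$ through the corresponding ridge, so there is no genuine gap.
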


\section{Lower bound on the number of 2-level polytopes}
\label{sec:lower_bound}

In this section we prove that the number of affine equivalence classes of $2$-level $d$-polytopes is $2^{\Omega(d^2)}$. To do that we use a well known family of $2$-level polytopes: the family of stable set polytopes of bipartite graphs. 
First, we show that two non-isomorphic bipartite graphs lead to affinely nonequivalent stable set polytopes, whenever the minimum degree of both graphs is at least $2$. 
Then, we use the result by~\cite{ErdKlei73}  to show the lower bound $2^{\Omega(d^2)}$ for the number of isomorphism classes for bipartite graphs with minimum degree at least $2$.

 The \emph{stable set polytope} of a graph $G=(V,E)$, denoted by $\stab(G)$, is the convex hull of the characteristic vectors of stable sets of $G$.
Let $G=(V,E)$ be a bipartite graph with no isolated nodes, then the stable set polytope $\stab(G)$ can be described in the following way:
\begin{equation}\label{eq:stab_bip}
\stab(G) = 
\biggl\{
\begin{array}{@{\hskip.5mm}c@{\hskip1mm}c@{\hskip1mm}l@{\hskip2mm}l@{\hskip.5mm}}
\multirow{2}{*}{$x \in \R^V$} & \multirow{2}{*}{$\biggm\vert$} & x_v \ge 0  &\text{for all } v \in V \\
&& x_u + x_v \le 1 &\text{for all } \{u,v\} \in E
\end{array}
\biggr\}\,.
\end{equation}
It is straightforward to verify that each of the above inequalities defines a facet of $\stab(G)$ whenever $G$ is bipartite. Moreover, since $\stab(G)$ is a full-dimensional polytope, different inequalities above define different facets of $\stab(G)$.

\begin{clm}\label{clm:unique_simple_vertex}
Let  $G=(V,E)$ be a $d$-node graph such that the minimum degree of a node in $G$ is at least~$2$. Then $\stab(G)$  has a unique simple vertex $\0_d$, i.e.~$\0_d$ is the only vertex of $\stab(G)$ contained in exactly $d$ facets of $\stab(G)$.
\end{clm}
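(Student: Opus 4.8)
The plan is to read off everything from the explicit facet description~\eqref{eq:stab_bip}. Recall that the vertices of $\stab(G)$ are exactly the characteristic vectors $\chi^S \in \{0,1\}^V$ of the stable sets $S$ of $G$, and that since $G$ is bipartite with no isolated node (the latter being guaranteed by the minimum-degree hypothesis), $\stab(G)$ is full-dimensional in $\R^V = \R^d$. In a $d$-polytope every vertex lies on at least $d$ facets, so a vertex is simple precisely when it lies on \emph{exactly} $d$ facets. The strategy is therefore to count, for each stable set $S$, how many of the facets listed in~\eqref{eq:stab_bip} are tight at $\chi^S$, and to show that this count exceeds $d$ as soon as $S \neq \emptyset$.

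I would split the count over the two families of facets. A nonnegativity facet $x_v \ge 0$ is tight at $\chi^S$ exactly when $v \notin S$, so these contribute $d - |S|$ tight facets. An edge facet $x_u + x_v \le 1$ is tight exactly when $x_u + x_v = 1$; since $S$ is stable, the case $u,v \in S$ cannot occur, so tightness means that exactly one of $u,v$ lies in $S$. Thus the number of tight edge facets equals the number $\delta(S)$ of edges of $G$ having exactly one endpoint in $S$, and $\chi^S$ lies on exactly $(d-|S|) + \delta(S)$ facets. The crux is to bound $\delta(S)$ from below: because $S$ is stable, \emph{every} edge incident to a vertex of $S$ has its other endpoint outside $S$ and is counted once in $\delta(S)$, which gives the identity $\delta(S) = \sum_{v \in S} \deg(v)$. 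Invoking the minimum-degree assumption $\deg(v) \ge 2$ then yields $\delta(S) \ge 2|S|$, so $\chi^S$ lies on at least $(d-|S|) + 2|S| = d + |S|$ facets.

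To finish, I would separate the two cases. For $S = \emptyset$ we have $\chi^\emptyset = \0_d$ with $|S| = 0$ and $\delta(S) = 0$, so $\0_d$ lies on exactly $d$ facets and is simple. For any nonempty stable set $|S| \ge 1$, whence the count is at least $d + |S| > d$ and the corresponding vertex is not simple. Since these characteristic vectors exhaust the vertices of $\stab(G)$, this shows $\0_d$ is the unique simple vertex. I do not anticipate a genuine obstacle: the single step needing care is the identity $\delta(S) = \sum_{v \in S}\deg(v)$, which is exactly where stability of $S$ enters, the minimum-degree bound then converting the equality $|\partial S| = |S|$ required for simplicity into a strict inequality for every nonempty $S$.
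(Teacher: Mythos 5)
Your proof is correct and is essentially the paper's own argument: at the vertex $\chi^S$ you count $d-|S|$ tight nonnegativity facets plus, using stability and the minimum-degree bound, $\delta(S)=\sum_{v\in S}\deg(v)\ge 2|S|$ tight edge facets, so having exactly $d$ tight facets forces $S=\emptyset$. The only difference is that you explicitly invoke bipartiteness so that \eqref{eq:stab_bip} lists all facets (with distinct inequalities giving distinct facets); this restriction is in fact necessary, since for general graphs of minimum degree $2$ the statement fails (e.g.\ $\stab(K_3)$ is a simplex, all of whose vertices are simple), and the paper's proof silently relies on the same assumption when it asserts that each edge incident to a node of $S$ yields its own facet.
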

\begin{proof}
 Let us consider a vertex $w$ of $\stab(G)$, corresponding to a stable set $S\subseteq V$. The vertex $w$ is contained in some facets defined by non-negativity constraints and some facets defined by edge constraints. First, there are exactly $|V\setminus S|=d-|S|$ facets corresponding to non-negativity constraints, which contain the vertex $w$. Second, to each edge incident to a node in $S$ corresponds a facet which contains $w$. Since each node in $G$ has degree at least $2$, there are at least
\[
\big(d-|S|\big)+2|S|=d+|S|
\]
facets containing the vertex $w$. Now, note that the vertex $\0_d$ of $\stab(G)$ corresponding to $S=\emptyset$ is contained in exactly $d$ facets, finishing the proof.
\end{proof}

\begin{clm}\label{clm:neighbours_empty_set}
Given a $d$-node graph $G=(V,E)$, the vertex $\0_d$ is incident only to the vertices $\ee_v$, $v\in V$ of $\stab(G)$.
\end{clm}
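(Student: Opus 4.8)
The plan is to understand the geometric structure of $\stab(G)$ near the vertex $\0_d$ and to identify exactly which vertices share an edge of the polytope with $\0_d$. Recall that two vertices of a polytope are adjacent (joined by an edge of the polytope) if and only if the set of facets containing both of them has the right codimension; equivalently, the segment joining them is a face. So the goal is to determine, among all vertices $w$ of $\stab(G)$ corresponding to stable sets $S \subseteq V$, which ones are adjacent to $\0_d$, and to show these are precisely the single-node stable sets, i.e.\ the $\ee_v$ for $v \in V$.

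First I would fix the facet structure at $\0_d$. As computed in Claim~\ref{clm:unique_simple_vertex}, the vertex $\0_d$ corresponds to $S = \emptyset$ and lies on exactly the $d$ facets given by the non-negativity constraints $x_v \ge 0$, $v \in V$; these are exactly the $d$ coordinate hyperplanes, so $\0_d$ is a simple vertex and the $d$ edges emanating from it are obtained by relaxing one coordinate constraint at a time. Concretely, the edge directions at $\0_d$ are spanned by the standard basis vectors $\ee_v$, since near $\0_d$ the polytope locally looks like the non-negative orthant intersected with the edge constraints $x_u + x_v \le 1$.

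Next I would argue that following the edge in direction $\ee_v$ from $\0_d$ leads to the vertex $\ee_v$: moving from $\0_d$ along $\ee_v$, we increase $x_v$ while all other coordinates stay at $0$, and we stop at the first binding inequality; since $G$ has no isolated nodes, $v$ has at least one neighbour, so some edge constraint $x_u + x_v \le 1$ becomes tight at $x_v = 1$, yielding the point $\ee_v$, which is indeed a vertex of $\stab(G)$ (it corresponds to the stable set $\{v\}$). This shows each $\ee_v$ is adjacent to $\0_d$. Conversely, since $\0_d$ is simple with exactly $d$ incident edges, and we have exhibited $d$ distinct neighbours $\ee_v$, $v \in V$, there can be no further neighbours; hence $\0_d$ is incident only to the vertices $\ee_v$.

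The step I expect to require the most care is the verification that moving along the edge direction $\ee_v$ actually terminates at $\ee_v$ rather than overshooting or stopping short: one must confirm that $x_v = 1$ is the correct stopping value (using that $v$ has a neighbour, so the ray does hit a facet) and that the resulting point is a genuine vertex and not merely a point on an edge. This is where the hypothesis that $G$ has no isolated nodes is used, and it dovetails with the already-established simplicity of $\0_d$ to pin down the neighbours exactly.
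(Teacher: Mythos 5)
Your proof is correct and rests on the same key fact as the paper's one-line proof: the only facets of $\stab(G)$ containing $\0_d$ are the non-negativity facets $x_v \ge 0$. You merely spell out the details the paper leaves implicit (simplicity of $\0_d$, edge directions along the coordinate axes, and ray-shooting to the endpoint $\ee_v$), so the two arguments are essentially identical.
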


\begin{proof}
The statement follows from the fact that the vertex $\0_d$ of $\stab(G)$ is incident only to the facets induced by non-negativity constraints $x_v\ge 0$, $v\in V$. 
\end{proof}

\begin{lem}\label{lem:non-isom_bip_stab}
Let $G_1 = (V_1,E_1)$, $G_2 = (V_2,E_2)$ be two graphs such that the minimum degree of nodes in $G_1$ and $G_2$ is at least $2$. Then $G_1$, $G_2$ are isomorphic if and only if $\stab(G_1)$, $\stab(G_2)$ are affinely equivalent.
\end{lem}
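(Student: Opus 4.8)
The plan is to prove the two directions of the equivalence separately, with the forward direction (isomorphism implies affine equivalence) being routine and the reverse direction (affine equivalence implies isomorphism) requiring the combinatorial claims already established. For the forward direction, suppose $G_1$ and $G_2$ are isomorphic via a bijection $\sigma : V_1 \to V_2$. Then $\sigma$ induces a permutation of coordinates, i.e.\ a linear map $\R^{V_1} \to \R^{V_2}$, that maps characteristic vectors of stable sets of $G_1$ to characteristic vectors of stable sets of $G_2$; since $\stab(G_i)$ is the convex hull of these characteristic vectors, this coordinate permutation carries $\stab(G_1)$ onto $\stab(G_2)$. A coordinate permutation is a linear (hence affine) bijection, so the two polytopes are affinely equivalent.

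For the reverse direction, assume $\phi$ is an affine bijection with $\phi(\stab(G_1)) = \stab(G_2)$. The goal is to recover a graph isomorphism from $\phi$. The key idea is that the combinatorial structure we need is pinned down by the distinguished vertex $\0$. By Claim~\ref{clm:unique_simple_vertex}, $\0_{d}$ is the unique simple vertex of each $\stab(G_i)$ (here $d = |V_1| = |V_2|$, which must agree since affinely equivalent polytopes have the same dimension). Since an affine equivalence preserves the vertex--facet incidence structure, it must send the unique simple vertex of $\stab(G_1)$ to the unique simple vertex of $\stab(G_2)$; that is, $\phi(\0) = \0$. Next, by Claim~\ref{clm:neighbours_empty_set}, the vertices adjacent to $\0$ in the graph of $\stab(G_i)$ are exactly the standard basis vectors $\ee_v$, $v \in V_i$. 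Because $\phi$ preserves edges of the polytope graph, it induces a bijection $\sigma$ between $\{\ee_v : v \in V_1\}$ and $\{\ee_w : w \in V_2\}$, hence a bijection $\sigma : V_1 \to V_2$ of the node sets.

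It then remains to check that $\sigma$ is a graph isomorphism, i.e.\ that $\{u,v\} \in E_1$ if and only if $\{\sigma(u),\sigma(v)\} \in E_2$. The natural approach is to characterize edges of $G_i$ intrinsically in terms of the polytope. One clean way: the two neighbours $\ee_u, \ee_v$ of $\0$ are themselves adjacent along an edge of $\stab(G_i)$ precisely when $\{u,v\} \notin E_i$ (since $\ee_u + \ee_v$ is then also a vertex, so $\ee_u, \ee_v$ lie on a common $2$-face, a square, whereas if $\{u,v\} \in E_i$ the segment $[\ee_u,\ee_v]$ is an edge cut off by the constraint $x_u + x_v \le 1$). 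Care is needed to verify which of these configurations yields a polytope edge. Since $\phi$ is an affine equivalence, it preserves this adjacency among the neighbours of $\0$, and therefore $\sigma$ preserves the edge relation, completing the proof.

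The main obstacle will be the last step: correctly distinguishing edges of $G_i$ from non-edges purely in terms of the face structure of $\stab(G_i)$ near $\0$, and confirming that this distinction is indeed preserved by $\phi$. The reductions in the first two paragraphs are essentially bookkeeping, once one knows that affine equivalences preserve the vertex--edge graph and the vertex--facet lattice of a polytope; the genuine content is isolating a face-theoretic invariant that encodes adjacency in $G_i$ and showing it transfers across $\phi$.
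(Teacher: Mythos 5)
Your proposal follows essentially the same route as the paper's proof: the forward direction is dismissed as routine, and in the reverse direction you invoke Claim~\ref{clm:unique_simple_vertex} to force $\phi(\0_d)=\0_d$ and Claim~\ref{clm:neighbours_empty_set} to extract a bijection $\sigma\colon V_1\to V_2$, exactly as the paper does. The only divergence is the final invariant used to recover the edge relation: the paper uses the fact that $\{u,v\}\in E$ if and only if $\0_d$, $\ee_u$, $\ee_v$ span a triangular face of $\stab(G)$, while you use adjacency of $\ee_u$ and $\ee_v$ in the graph of the polytope. These invariants are equivalent, so your route works in principle; however, your stated criterion is backwards and contradicts your own parenthetical. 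The correct statement is that $\ee_u$ and $\ee_v$ are adjacent in $\stab(G)$ precisely when $\{u,v\}\in E$: in that case, intersecting the facets $x_w\ge 0$, $w\notin\{u,v\}$, yields the face $\conv\{\0_d,\ee_u,\ee_v\}$, a triangle, of which $[\ee_u,\ee_v]$ is an edge; whereas if $\{u,v\}\notin E$, the same intersection is the square with vertex set $\{\0_d,\ee_u,\ee_v,\ee_u+\ee_v\}$, in which $[\ee_u,\ee_v]$ is a diagonal and hence not a face of $\stab(G)$. This is exactly the verification you deferred with ``care is needed'', and it is short. Note also that the slip is self-cancelling as far as the conclusion goes: whichever of the two candidate criteria is the correct one, it is the same intrinsic criterion for both $\stab(G_1)$ and $\stab(G_2)$, so applying it on both sides of the affine map yields $\{u,v\}\in E_1$ if and only if $\{\sigma(u),\sigma(v)\}\in E_2$ either way. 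Still, as written one of your intermediate claims is false, so you should fix the direction (or simply use the paper's triangle-face criterion) to have a complete and correct proof.
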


\begin{proof}
Clearly, if $G_1$ and $G_2$ are isomorphic then $\stab(G_1)$, $\stab(G_2)$ are affinely equivalent as well.

Now suppose that $\stab(G_1)$, $\stab(G_2)$ are affinely equivalent, i.e.\ there exists a bijective affine map $\mu:\R^d\rightarrow \R^d$ such that $\mu (\stab(G_1))=\stab(G_2)$, where $d=|V_1|=|V_2|$. By Claim~\ref{clm:unique_simple_vertex}, we have $\mu (\0_d)=\0_d$, and by Claim~\ref{clm:neighbours_empty_set}, we have $\mu(\ee_v)=\ee_{\pi(v)}$ for all $v\in V_1$, where $\pi:V_1\rightarrow V_2$ is a bijection. Now, it is straightforward to verify that $\pi$ defines an isomorphism between $G_1$ and $G_2$, because for a graph $G=(V, E)$ we have that $\{u,v\} \in E$ if and  only if $\0_d$, $\ee_u$, $\ee_v$ define a triangular face of $\stab(G)$.
\end{proof}

Let $V$ be any set of cardinality $d$. The result by~\cite[Lemma 7]{ErdKlei73} shows that the number of labeled bipartite graphs with node set $V$ is at least $2^{\frac{d^2}{4}+d-2 \log d}$ and at most $2^{\frac{d^2}{4}+d}$. Hence, 
\[
d^2 2^{\frac{(d-1)^2}{4}+d}=2^{\frac{d^2}{4}+\frac{d}{2}+\frac{1}{4}+2\log{d}}
\] 
is an upper bound for the number of labeled bipartite graphs with node set $V$ and minimum degree less than $2$. Thus there are at least
\[
2^{\frac{d^2}{4}+d-2 \log d} - 2^{\frac{d^2}{4}+\frac{d}{2}+\frac{1}{4}+2\log{d}}
\]
many labeled bipartite graphs with node set $V$ and minimum degree at least $2$. Therefore, there are $2^{\frac{d^2}{4}(1-o(1))} = 2^{\Omega(d^2)}$ isomorphism classes of bipartite graphs on $d$ nodes with minimum degree at least $2$. This fact together with Lemma~\ref{lem:non-isom_bip_stab} implies the following result.

\begin{thm}
The number of affine equivalence classes of $2$-level $d$-polytopes is $2^{\Omega(d^2)}$.
\end{thm}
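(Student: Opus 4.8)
The plan is to realize a large family of pairwise non-affinely-equivalent $2$-level $d$-polytopes as stable set polytopes of bipartite graphs, and then to count how many such graphs exist up to isomorphism. First I would fix $d$ and restrict attention to bipartite graphs $G=(V,E)$ with $|V|=d$, no isolated node, and minimum degree at least $2$. For any such graph the description~\eqref{eq:stab_bip} shows that $\stab(G)$ is a full-dimensional $2$-level $d$-polytope: its facets are exactly the non-negativity constraints $x_v\ge 0$ and the edge constraints $x_u+x_v\le 1$, and each vertex, being the characteristic vector of a stable set, attains slack $0$ or $1$ on every one of these inequalities. Hence the associated slack matrix is a $0/1$-matrix, so every admissible $G$ contributes a $2$-level $d$-polytope.

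Next I would use the fact that the assignment $G\mapsto\stab(G)$ is injective up to the relevant equivalences, which is precisely the content of Lemma~\ref{lem:non-isom_bip_stab}: on graphs of minimum degree at least $2$, two graphs are isomorphic if and only if their stable set polytopes are affinely equivalent. Consequently the number of affine equivalence classes of $2$-level $d$-polytopes is bounded below by the number of \emph{isomorphism} classes of bipartite graphs on $d$ nodes with minimum degree at least $2$.

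It then remains to count these isomorphism classes, and here I would invoke the Erd\H{o}s--Kleitman estimate. It provides at least $2^{\frac{d^2}{4}+d-2\log d}$ labeled bipartite graphs on a fixed $d$-element node set, while the number of labeled bipartite graphs violating the minimum-degree-$2$ condition is at most $d^2\,2^{\frac{(d-1)^2}{4}+d}$, dominated by choosing a single low-degree vertex and hence a factor $2^{\Theta(d)}$ smaller. Subtracting still leaves $2^{\frac{d^2}{4}(1-o(1))}$ labeled graphs, and passing from labeled graphs to isomorphism classes costs at most a factor $d!=2^{O(d\log d)}$, which is absorbed into the $o(1)$ in the exponent. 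This yields $2^{\Omega(d^2)}$ isomorphism classes, and therefore $2^{\Omega(d^2)}$ affine equivalence classes of $2$-level $d$-polytopes.

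The main obstacle lies entirely in the bookkeeping of this final counting step: I must check that neither removing the low-minimum-degree graphs nor dividing out the $d!$ relabelings erodes the leading term $2^{d^2/4}$, i.e.\ that both corrections live in the exponent at scale $2^{o(d^2)}$. Once this is verified, combining the geometric injectivity of Lemma~\ref{lem:non-isom_bip_stab} with the combinatorial lower bound delivers the claimed $2^{\Omega(d^2)}$ bound.
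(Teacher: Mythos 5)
Your proposal is correct and follows essentially the same route as the paper: stable set polytopes of bipartite graphs with minimum degree at least $2$, injectivity up to affine equivalence via Lemma~\ref{lem:non-isom_bip_stab}, and the Erd\H{o}s--Kleitman count with the subtraction of low-minimum-degree graphs and the $d! = 2^{O(d\log d)}$ relabeling factor absorbed into the $2^{\frac{d^2}{4}(1-o(1))}$ estimate. The bookkeeping you flag as the main obstacle indeed goes through exactly as you describe, since both corrections are of size $2^{O(d\log d)}$ and hence negligible against $2^{d^2/4}$.
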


By Lemma~\ref{lem:2L_poly_conf}, every $2$-level $d$-polytope yields a $2$-level $(d+1)$-configuration. We obtain the following result:

\begin{cor}
The number of linear equivalence classes of $2$-level $d$-configurations is $2^{\Omega(d^2)}$.
\end{cor}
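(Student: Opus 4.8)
The plan is to combine the preceding theorem with Lemma~\ref{lem:2L_poly_conf} by showing that the assignment $P \mapsto (A',B')$ of that lemma sends affinely inequivalent $2$-level polytopes to linearly inequivalent $2$-level configurations. Since the theorem produces $2^{\Omega(d^2)}$ affine equivalence classes of $2$-level $d$-polytopes, such an injectivity statement gives $2^{\Omega(d^2)}$ linear equivalence classes of $2$-level $(d+1)$-configurations; reindexing $d+1 \to d$ then yields $2^{\Omega((d-1)^2)} = 2^{\Omega(d^2)}$ classes of $2$-level $d$-configurations, as claimed.

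First I would fix two full-dimensional $2$-level $d$-polytopes $P_1$, $P_2$ together with chosen maximal pairs of $2$-level descriptions, and let $(A_1',B_1')$, $(A_2',B_2')$ be the associated $2$-level $(d+1)$-configurations from Lemma~\ref{lem:2L_poly_conf}, so that each $B_k'$ consists of the lifted vertices $(v_j,-1)$ together with $\0_{d+1}$. Arguing by contraposition, I would assume that these configurations are linearly equivalent via a non-singular $M \in \R^{(d+1)\times(d+1)}$ with $B_2' = \set{Mb}{b \in B_1'}$, and then deduce that $P_1$, $P_2$ are affinely equivalent.

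The key step is to read off the structure forced on $M$. As $M$ is a linear bijection it fixes $\0_{d+1}$, and since $\0_{d+1}$ is the unique zero vector in each $B_k'$, the map $M$ sends the nonzero vectors $(v_j,-1)$ of $B_1'$ bijectively onto the nonzero vectors of $B_2'$, all of which have last coordinate $-1$. Because $P_1$ is full-dimensional, $d+1$ of its vertices are affinely independent, so the corresponding lifts $(v_j,-1)$ span $\R^{d+1}$; comparing last coordinates on this spanning set shows $\ee_{d+1}\tra M = \ee_{d+1}\tra$, i.e.\ $M$ preserves the last coordinate and hence has block form $M = \left(\begin{smallmatrix} N & t \\ \0\tra & 1 \end{smallmatrix}\right)$ with $N\in\R^{d\times d}$ non-singular and $t\in\R^d$. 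Reading off the induced action on the hyperplane $\set{x\in\R^{d+1}}{x_{d+1}=-1}$, the affine bijection $\mu(v) \defeq Nv - t$ of $\R^d$ sends each vertex of $P_1$ to a vertex of $P_2$, whence $\mu$ maps the vertex set of $P_1$ bijectively onto that of $P_2$ and therefore $\mu(P_1) = P_2$. This establishes the contrapositive, so the map descends to an injection on equivalence classes.

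The main obstacle, and the only place requiring genuine care, is precisely this structural analysis of $M$: one must use full-dimensionality of $P_1$ to guarantee that the lifted vertices span $\R^{d+1}$, so that $M$ is forced to preserve the last coordinate rather than merely permute the finite sets $B_1'$, $B_2'$ in some unstructured way. Without this spanning property, a linear equivalence of configurations need not descend to an affine equivalence of the underlying polytopes, and the counting argument would break down.
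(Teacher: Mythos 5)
Your proposal is correct, and it reaches the corollary by a genuinely different route in the one place where work is actually needed. The paper's own proof is a one-liner: it invokes Lemma~\ref{lem:2L_poly_conf} and leaves the injectivity on equivalence classes implicit, the intended justification being the slack-matrix machinery of Section~\ref{sec:preliminary} --- linear equivalence of configurations preserves the slack matrix, the configuration of Lemma~\ref{lem:2L_poly_conf} has as slack matrix precisely the maximal slack matrix of $P$ (plus a zero column), and by the corollary following Lemma~4, two $2$-level polytopes with the same maximal slack matrix are affinely equivalent. You instead prove the injectivity directly: from $B_2' = \set{Mb}{b \in B_1'}$ you deduce that $M$ fixes $\0_{d+1}$, permutes the lifted points, and --- using full-dimensionality so that the lifts span $\R^{d+1}$ --- must satisfy $\ee_{d+1}\tra M = \ee_{d+1}\tra$, hence is block-triangular and induces an affine bijection $\mu$ with $\mu(P_1) = P_2$. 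This is correct and has the advantage of being self-contained (it uses only the definition of linear equivalence, not the uniqueness of maximal slack matrices), and it makes the underlying geometry explicit, namely that any linear equivalence of the homogenized configurations respects the hyperplane $x_{d+1} = -1$; the paper's route is shorter only because that machinery was already set up. One cosmetic repair: the nonzero elements of $B_1'$ are the lifts of the points of the maximal set $V_1$, which contains but is not a priori equal to the vertex set of $P_1$, so the clean phrasing of your last step is $\mu(V_1) = V_2$ and therefore $\mu(P_1) = \mu(\conv(V_1)) = \conv(\mu(V_1)) = \conv(V_2) = P_2$; nothing else in your argument changes.
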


\section{First upper bound for 2-level configurations}
\label{sec:naive_upper_bound}

The next theorem gives a first upper bound on the total number of maximal $0/1$-matrices with rank $d$. This will be the basis for the refined upper bounds in Sections~\ref{sec:upper_bound_cones} and~\ref{sec:improved_upper_bound}.

\begin{thm}\label{thm:naive_upper_bound}
The number of maximal elements of $\M_d$ (up to permutation of rows and columns) is $2^{O(d^3)}$. 
\end{thm}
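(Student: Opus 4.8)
The goal is to bound the number of maximal $0/1$-matrices of rank $d$ (up to row/column permutation) by $2^{O(d^3)}$. By the reductions established earlier (Lemmas~\ref{lem:01-configuration-A} and~\ref{lem:01-configuration-B}), every maximal element of $\M_d$ is the slack matrix $S(A,B)$ of some $2$-level $d$-configuration where, say, $A \subseteq \{0,1\}^d$ and $B$ spans $\R^d$. So it suffices to count such configurations up to linear equivalence. The plan is to count by first bounding the number of possibilities for $A$, and then arguing that $A$ essentially determines $B$ once we know which columns are allowed.

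\textbf{Bounding $A$.} Since $A \subseteq \{0,1\}^d$, we trivially have $|A| \le 2^d$ and there are at most $\binom{2^d}{|A|} \le 2^{d \cdot 2^d}$ choices of $A$ as a subset of $\{0,1\}^d$ --- but this is far too large. The better route is to exploit maximality and rank: I would instead bound the number of rows and columns of a maximal matrix in $\M_d$. A maximal rank-$d$ $0/1$-matrix has no repeated rows or columns, and each row (resp.\ column) is a $0/1$-vector, so a priori $m, n \le 2^d$. However, for counting purposes what matters is that once we fix a spanning set of $d$ rows (which determines a basis and hence a coordinate frame up to the $GL_d$ action), every other row of $A$ is a $0/1$-vector in $\{0,1\}^d$, giving at most $2^d$ choices per row and at most $m \le 2^d$ rows. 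The first step, then, is to show $m \le 2^d$ and $n \le 2^d$ and conclude that the total number of pairs $(A,B)$ with $A \subseteq \{0,1\}^d$ is controlled by how many distinct $0/1$-columns can appear.

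\textbf{The key counting.} Fix $A \subseteq \{0,1\}^d$ with $A$ spanning $\R^d$. Each vector $b \in B$ must satisfy $\ip{a}{b} = a\tra b \in \{0,1\}$ for every $a \in A$; in other words, $b$ is a solution to the system $A b \in \{0,1\}^{|A|}$, where $A$ here denotes the matrix whose rows are the elements of $A$. Since $A$ has rank $d$, choosing $d$ linearly independent rows $a_1,\dots,a_d$ and fixing the values $\ip{a_i}{b} \in \{0,1\}$ uniquely determines $b$. Hence $B$ is contained in the set of solutions of a $0/1$ linear system, and there are at most $2^d$ such solution vectors $b$ (one per choice of the $d$ values on the basis rows). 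Consequently, once $A$ is fixed, the set $B$ is forced to be the unique maximal set of vectors satisfying the slack condition, and the number of admissible columns is at most $2^d$. The upshot is that the slack matrix $S(A,B)$ is determined by $A$ together with the choice of which of these $\le 2^d$ candidate columns actually appear; by maximality, all valid columns appear, so $S$ is determined by $A$ alone (up to permutation).

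\textbf{Putting it together.} The dominant term is the number of choices for $A$. Fixing a basis of $d$ rows uses up the $GL_d(\mathbb{R})$ freedom, and in the resulting frame each of the at most $2^d$ rows is a $0/1$-vector in $\{0,1\}^d$; encoding which $0/1$-vectors appear as rows costs at most $2^{d \cdot 2^d}$, which is still too crude. I therefore expect the argument to go through a sharper bound: using Lemma~\ref{lem:01-configuration-A} we may take $A \subseteq \{0,1\}^d$, and the set of valid rows is constrained by the requirement that $\ip{a}{b}\in\{0,1\}$ for all $b$ in a spanning set $B$; picking $d$ spanning columns $b_1,\dots,b_d$ shows each row $a$ is determined by the $0/1$-vector $(\ip{a}{b_1},\dots,\ip{a}{b_d})$, so there are at most $2^d$ rows and the matrix $A$ is one of at most $\binom{2^d}{\le 2^d}$ subsets --- giving $2^{O(d \cdot 2^d)}$. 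To reach $2^{O(d^3)}$ the real saving must come from bounding the number of rows and columns by a \emph{polynomial} in $d$ rather than by $2^d$: the decisive fact is that a rank-$d$ $0/1$-matrix with distinct rows and columns has at most polynomially many --- in fact at most $\binom{d}{\lfloor d/2\rfloor}$-type or $O(d^{?})$ --- distinct rows, bounded via a counting/VC-type or LP-duality argument on the $0/1$ slack structure. The main obstacle, and the crux of the whole proof, is establishing that $m,n = 2^{O(d^2)}$ (equivalently $\log m, \log n = O(d^2)$) and that the number of distinct slack matrices is $2^{O(d^3)}$; I would attack this by encoding each column of $B$ as an integer vector with entries of bounded magnitude (using $D \subseteq \mathbb{Z}^d$ from Lemma~\ref{lem:01-configuration-cone}) and counting the number of integer points compatible with the $0/1$ constraints, where the entry bounds come from Cramer's rule applied to the unimodular-like structure forced by the $2$-level condition.
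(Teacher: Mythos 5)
Your reduction is set up correctly, and it matches the paper's framework: you take $A \subseteq \{0,1\}^d$ via Lemma~\ref{lem:01-configuration-A} and observe that maximality forces $B$ to be the unique largest set of vectors $b$ with $\ip{a}{b}\in\{0,1\}$ for all $a\in A$, so it suffices to count the possible sets $A$ (the paper does the symmetric thing, taking $B\subseteq\{0,1\}^d$ via Lemma~\ref{lem:01-configuration-B} and reconstructing first $A$, then $B$). But the step you explicitly flag as ``the crux of the whole proof'' is precisely what is missing, and it is not filled by any of the directions you suggest. The paper's key idea is to \emph{linearize} the $2$-level condition by a quadratic lift: $\ip{a}{b}\in\{0,1\}$ iff $\ip{a}{b}^2-\ip{a}{b}=0$ iff $\ip{(aa\tra,-a)}{(bb\tra,b)}=0$, which is a \emph{linear} condition on the lifted vector $(bb\tra,b)\in\R^{d^2+d}$. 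Consequently $A$ depends only on the linear span of $\set{(bb\tra,b)}{b\in B}$. Since for $b\in\{0,1\}^d$ the matrix $bb\tra$ is symmetric with diagonal equal to $b$, all these lifted vectors live in a space of dimension $d(d+1)/2$, so the span is already generated by at most $d(d+1)/2$ of the vectors $b\in B\subseteq\{0,1\}^d$. Thus every maximal element of $\M_d$ is encoded by an (ordered) choice of at most $d(d+1)/2$ vectors from $\{0,1\}^d$, giving at most $2^{d\cdot d(d+1)/2}=2^{O(d^3)}$ matrices up to row/column permutations.

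Moreover, the escape routes you propose would fail. You hope to bound the number of rows and columns of a maximal matrix in $\M_d$ by a polynomial in $d$; this is impossible. The maximal slack matrix of the cube $[0,1]^{d-1}$ yields, via Lemma~\ref{lem:2L_poly_conf} and the remark in Section~\ref{sec:preliminary}, a maximal element of $\M_d$ with $2^{d-1}$ columns (one per vertex), so no VC-type or LP-duality argument can give a polynomial bound (and note $\binom{d}{\lfloor d/2\rfloor}$ is itself exponential in $d$, not polynomial). For the same reason, encoding the columns of $B$ as bounded integer vectors and counting integer points does not by itself help: the obstacle is never the magnitude of the entries but the fact that $|A|$ and $|B|$ can be of order $2^d$, so any scheme that records rows or columns one by one is stuck at $2^{\Theta(d2^d)}$. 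The saving must come from showing the configuration is generated by $O(d^2)$ of its vectors, and that is exactly what the lifting argument delivers.
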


\begin{proof}
Let $S(A,B)$ be the slack matrix for a $2$-level $d$-configuration $(A,B)$. In our proof, we show that the matrix $S(A,B)$ (up to permutation of rows and columns) can be reconstructed from a set of at most $d(d+1)/2$ linear equations, where each linear equation belongs to a fixed collection of $2^d$ possible linear equations. This gives us the desired upper bound.

By Lemma~\ref{lem:01-configuration-B}, we can assume $B\subseteq\{0,1\}^d$. From the maximality of $(A,B)$, we get
\begin{align*}
 A&{}= \set{a\in \R^d}{\ip{a}{b}\in \{0,1\}\text{ for every } b\in B}\\
 & = \set{a\in \R^d}{ \ip{a}{b}^2-\ip{a}{b}=0\text{ for every } b\in B}\\
 & = \set{a\in \R^d}{\ip{(a a\tra ,-a)}{(bb\tra ,b)}=0\text{ for every } b\in B}\,.   
\end{align*}
Thus, knowing the linear space spanned by the vectors $(bb\tra ,b)$, $b\in B$, we are able to first reconstruct the set $A$ and then reconstruct the set $B = \set{b \in \R^d}{\ip{a}{b} \in \{0,1\}\text{ for every } a \in A}$.

The dimension of the linear space span of all the vectors $(bb\tra ,b)$, $b\in \{0,1\}^d$ is $d(d+1)/2$. Since $B\subseteq \{0,1\}^d$, the dimension of the linear span of the vectors $(bb\tra ,b)$, $b\in B$ is at most $d(d+1)/2$. Thus, there is $J\subseteq B$, $|J|\leq d(d+1)/2$, such that the linear spaces spanned by $(bb\tra ,b)$, $b\in B$ and $(bb\tra ,b)$, $b\in J$ are the same. 
Hence, to define the desired linear subspace, it is enough to select at most $d(d+1)/2$ vectors $b\in B$. Each of these vectors is chosen in $\{0,1\}^d$. The result follows.
\end{proof}

\section{Second upper bound for 2-level cones}
\label{sec:upper_bound_cones}

In this section, we refine Theorem~\ref{thm:naive_upper_bound} for $2$-level configurations $(A,B)$ such that $A$ is a set of $0/1$-vectors and $B$ a set of integer vectors. Together with Lemmas~\ref{lem:01-configuration-polytope} and \ref{lem:01-configuration-cone}, our refinement implies Theorem~\ref{thm:second_upper_bound} from the introduction.

\begin{thm}\label{thm:second_upper_bound_conf}
The total number of $0/1$-matrices (up to permutation of rows and columns) which are slack matrices of some $2$-level $d$-configuration $(A,B)$ with $A\subseteq \{0,1\}^d$ and $B\subseteq \Z^d$ is at most $2^{O(d^2\log d)}$.
\end{thm}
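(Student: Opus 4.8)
The plan is to present each such slack matrix as a face of the \emph{correlation cone} and then to count faces. Let $\mathrm{Sym}_d$ denote the space of symmetric $d\times d$ matrices, identified with $\R^{\binom{d+1}{2}}$, and set $\mathcal{C}\defeq\cone\set{aa\tra}{a\in\{0,1\}^d}$; its extreme rays are the (at most) $2^d$ rank-one $0/1$-matrices $aa\tra$. The role of the hypothesis $B\subseteq\Z^d$ is to manufacture valid inequalities: for $b\in\Z^d$ put $Q_b\defeq bb\tra-\mathrm{diag}(b)$, so that for every $a\in\{0,1\}^d$, using $a_i^2=a_i$,
\[
\ip{Q_b}{aa\tra}=a\tra bb\tra a-a\tra\mathrm{diag}(b)\,a=\ip{a}{b}^2-\ip{a}{b}=\ip{a}{b}\bigl(\ip{a}{b}-1\bigr)\ge 0,
\]
the inequality holding because $\ip{a}{b}\in\Z$, with equality exactly when $\ip{a}{b}\in\{0,1\}$. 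Hence each $b\in B$ yields a valid inequality $\ip{Q_b}{X}\ge 0$ for $\mathcal{C}$ whose induced face collects precisely the generators $aa\tra$ with $a$ compatible with $b$.

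First I would build an injection from the matrices to be counted into the faces of $\mathcal{C}$. Given a configuration $(A,B)$ with $A\subseteq\{0,1\}^d$ and $B\subseteq\Z^d$, associate the face $F\defeq\set{X\in\mathcal{C}}{\ip{Q_b}{X}=0\text{ for all }b\in B}$. By the displayed identity together with the maximality of $A$, whose elements are $0/1$-vectors, the extreme rays of $F$ are exactly the $aa\tra$ with $a\in A$, so $F$ determines $A$; the maximality of $B$ then gives $B=\set{b\in\R^d}{\ip{a}{b}\in\{0,1\}\text{ for all }a\in A}$, so $A$ determines $B$ and hence $S(A,B)$ up to permutation of rows and columns. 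Consequently the number of $0/1$-matrices in the statement is at most the number of faces of $\mathcal{C}$ arising in this way.

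It then remains to bound this number of faces by $2^{O(d^2\log d)}$, and this is the crux. The difficulty is that $\mathcal{C}$ has $2^d$ extreme rays in dimension $\binom{d+1}{2}=\Theta(d^2)$, so the generic tools are too weak: the Upper Bound Theorem, or equivalently counting the flats of the matroid represented by the vectors $aa\tra$ (each flat being spanned by $O(d^2)$ of the $2^d$ generators), only yields $(2^d)^{O(d^2)}=2^{O(d^3)}$, which merely reproduces the bound of Section~\ref{sec:naive_upper_bound}. To recover the missing factor I would exploit the special geometry of the correlation cone—in particular that all its facet inequalities have $0/1$ coefficient matrices $aa\tra$—together with the structure theory of this cone \cite[Chapter~5]{Deza09}. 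The goal is to show that, although an arising face may have linear span of dimension $\Theta(d^2)$, it can be encoded by data of total size $O(d^2\log d)$ bits: for instance by a canonical cube-copositive witness $W\in\mathrm{Sym}_d$ with $F=\set{X\in\mathcal{C}}{\ip{W}{X}=0}$ whose entries are polynomially bounded, or by a weighted-graph description with polynomially bounded weights. Establishing that such a succinct, polynomially bounded encoding always exists—rather than the naive witness $\sum_{b\in B}Q_b$, whose entries may be as large as $2^{\Theta(d\log d)}$—is the main obstacle, and is precisely where the correlation cone, as opposed to an arbitrary cone with $2^d$ extreme rays, must be used.
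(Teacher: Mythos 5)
Your setup is exactly the paper's: the inequalities $\ip{Q_b}{X}\ge 0$ (the paper writes them as $\ip{(bb\tra,-b)}{(xx\tra,x)}\ge 0$ on a cone $K$ that it notes is linearly equivalent to the correlation cone), the observation that integrality of $\ip{a}{b}$ makes them valid, and the injection from slack matrices to faces via maximality of $A$ and $B$ are all correct and identical in substance. But the proof has a genuine gap at precisely the point you flag as ``the crux'': you do not prove the face bound, you only propose a program (find a canonical supporting witness $W$ with polynomially bounded entries, or a bounded-weight graph encoding) and acknowledge that establishing its existence is ``the main obstacle.'' So what you have is a correct reduction plus an open problem, not a proof.

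The idea you are missing is that the paper encodes a face \emph{primally}, not by a dual witness, and this makes the bound almost trivial. Let $F$ be any face of the cone $K$ (pointed, of dimension $d(d+1)/2$). Choose at most $d(d+1)/2$ extreme rays of $F$, among the generators $(xx\tra,x)$ with $x\in\{0,1\}^d$, whose linear span equals the linear span of $F$, and let $p$ be their \emph{sum}. Then $F$ is recoverable from $p$ alone: the minimal face $G$ of $K$ containing $p$ satisfies $G\subseteq F$, and since a face containing a sum of vectors of $K$ contains each summand, $G$ contains all the chosen rays, hence $\dim G=\dim F$ and $G=F$. Thus the number of faces is at most the number of possible sums $p$. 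Because the generators are $0/1$-vectors, $p$ is an integer vector in $[0,d(d+1)/2]^{d^2+d}$, so there are at most $\bigl(1+d(d+1)/2\bigr)^{d^2+d}=2^{O(d^2\log d)}$ such sums. Note how this sidesteps both problems you identified: one never chooses \emph{which} $d(d+1)/2$ generators out of $2^d$ (which gives the $2^{O(d^3)}$ count), and one never needs entries of the encoding to stay small by any structural property of the correlation cone beyond its generators being $0/1$ -- smallness is automatic for a sum of few $0/1$-vectors, in contrast to the witness $\sum_{b\in B}Q_b$ whose entries you rightly observe can blow up.
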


\begin{proof}
The proof is analogous to the proof of Theorem~\ref{thm:naive_upper_bound}. Due to maximality of $(A,B)$, we have
\begin{align*}
 A&{}=\set{a\in \R^d}{\ip{a}{b}\in \{0,1\} \text{ for every } b\in B}\\
 & = \set{a\in \R^d}{ \ip{a}{b}^2-\ip{a}{b}=0 \text{ for every } b\in B}\\
 & = \set{a\in \R^d}{\ip{(bb\tra ,-b)}{(a a\tra ,a)}=0 \text{ for every } b\in B}\,.   
\end{align*}

Notice that for any $x\in\R^d$, we have
\[
\ip{(bb\tra ,-b)}{(xx\tra ,x)}=\ip{b}{x}^2-\ip{b}{x}=\ip{b}{x}(\ip{b}{x}-1)\,.
\]
Now if both $b$ and $x$ are integer vectors, we have $\ip{b}{x}\in \Z$ and thus
\(
\ip{(bb\tra ,-b)}{(x x\tra ,x)} \geq 0
\).
Hence, for every $b\in B$ the inequality $\ip{(bb\tra ,-b)}{z}\ge 0$ defines a face of the  cone
\begin{equation}\label{eq:K_correlation}
K\coloneqq \cone\big(\set{z\in \R^{d^2+d}}{z=(xx\tra ,x) \text{ for some } x\in \{0,1\}^d}\big)\,.
\end{equation}
We remark that, since $x \in \{0,1\}^d$, we have $x_i^2 = x_i$ for all $i$. Thus, the vector $x$ appears on the diagonal of the matrix $xx\tra$. We deduce that the cone $K$ in~\eqref{eq:K_correlation} is linearly equivalent to the \emph{correlation cone}~\cite[Chapter 5]{Deza09}, which is defined as the conic hull of all matrices $xx\tra \in \R^{d^2}$ for $x \in \{0,1\}^d$.

The set $A$ is the set of all $x\in \{0,1\}^d$ such that the vector $z=(xx\tra ,x)$ lies in the following face of $K$:
\[
F \coloneqq \set{z\in K}{\ip{(bb\tra ,-b)}{z}=0 \text{ for every } b\in B}\,.
\]

Note that $K$ is a pointed cone of dimension $d(d+1)/2$, hence each of its faces is uniquely defined by the sum of at most $d(d+1)/2$ rays from the set of its extreme rays
\[
\set{z \in F}{ z=(xx\tra ,x) \text{ for some } x\in \{0,1\}^d}\,.
\]
Such a sum is always an integer vector in $[0,d(d+1)/2]^{d^2+d}$. Hence, the total number of possible faces of $K$ is at most $2^{O(d^2\log d)}$. Moreover, since each face defines at most one possible $A\subseteq\{0,1\}^d$, we have the desired upper bound on the total number of different sets $A\subseteq\{0,1\}^d$, and hence on the total number of different $2$-level $d$-configurations $(A,B)$, where $A\subseteq\{0,1\}^d$ and $B\subseteq \Z^d$.
\end{proof}

\section{Second upper bound for 2-level configurations}
\label{sec:improved_upper_bound}

In this section, we improve the upper bound in Theorem~\ref{thm:naive_upper_bound} for general $2$-level configurations, that is, without the extra assumptions of the previous section. This is our final result. It implies Theorem~\ref{thm:refined_upper_bound} from the introduction.

\begin{thm}\label{thm:refined_upper_bound_slack_matrices}
The number of maximal elements of $\M_d$ (up to permutation of rows and columns) is $2^{O(d^2\log^3(d))}$.
\end{thm}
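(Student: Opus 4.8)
The plan is to injectively encode each maximal element of $\M_d$, presented as a $2$-level $d$-configuration $(A,B)$, by a small weighted graph, and then to count the graphs. Write $X^\ast \defeq \set{y\in\R^d}{\ip{x}{y}\in\{0,1\}\text{ for all }x\in X}$ for the maximal dual of a set $X\subseteq\R^d$, so that maximality of $(A,B)$ reads $A^\ast=B$ and $B^\ast=A$. By Lemma~\ref{lem:01-configuration-A} I may assume $A\subseteq\{0,1\}^d$; then $B=A^\ast$, and hence the slack matrix $S(A,B)$, is determined by $A$ alone. Fixing a basis $a_1,\dots,a_d\in A$ and letting $U$ be the matrix with these rows, any $b$ with $\ip{a_i}{b}\in\{0,1\}$ for all $i$ satisfies $Ub\in\{0,1\}^d$, so $B$ lies in the finite candidate set $C\defeq U^{-1}\{0,1\}^d$, of size at most $2^d$.

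The key step is to show that some subset $A_0\subseteq A$ of size $O(d\log d)$ already satisfies $A_0^\ast=B$. Since $B\subseteq C$, this is a covering problem: for every \emph{bad} candidate $b\in C\setminus B$ there is, by $A^\ast=B$, a row $a\in A$ with $\ip{a}{b}\notin\{0,1\}$, and I must pick $O(d\log d)$ rows that jointly exclude all (at most $2^d$) bad candidates. I would treat this as a hitting-set problem for the range space on $A$ whose ranges are $R_b\defeq\set{a\in A}{\ip{a}{b}\notin\{0,1\}}$, one per bad candidate. Each $R_b$ is the complement, inside $A$, of the set where the single linear form $a\mapsto\ip{a}{b}$ takes a value in the fixed $2$-element set $\{0,1\}$, so this range space has VC dimension $O(d)$; the aim is then to extract a cover of size $O(d\log d)$ from the $\varepsilon$-net / greedy set-cover machinery, after first throwing the basis $a_1,\dots,a_d$ into $A_0$. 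This is the step I expect to be the main obstacle. The naive alternative—spanning the whole lift, namely the linear span of $\set{(aa\tra,-a)}{a\in A}$, whose dimension can be $\Theta(d^2)$—only produces $|A_0|=O(d^2)$ and hence the weaker $2^{O(d^3)}$ bound of Theorem~\ref{thm:naive_upper_bound}; the entire improvement rests on exploiting the $0/1$-structure to beat the dimension of the lift, and in particular on ruling out instances in which every bad candidate is excluded by only a handful of rows.

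Given such an $A_0=\{a_1,\dots,a_n\}$ with $n=O(d\log d)$, I encode it by its Gram matrix $G$, where $G_{ij}\defeq\ip{a_i}{a_j}$. As $a_i\in\{0,1\}^d$, each entry is an integer in $[0,d]\subseteq[0,n^2]$, so $G$ is exactly a graph on $n$ nodes with integer edge- and loop-weights in $[0,n^2]$. To recover $S(A,B)$ up to permutation of rows and columns from $G$, note that $G$ has rank $d$ and so reconstructs $a_1,\dots,a_n$ up to a common orthogonal transformation $O$ of $\R^d$. Since $A_0$ contains a basis it spans $\R^d$, so the reconstructed vectors $Oa_1,\dots,Oa_n$ have finite dual $\{Oa_1,\dots,Oa_n\}^\ast=O(A_0^\ast)=O(B)$; one then forms $(O(B))^\ast=O(A)$ and the matrix of inner products of $O(A)$ against $O(B)$. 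Because $O$ preserves every inner product $\ip{a}{b}$, this matrix equals $S(A,B)$ up to reordering, as required.

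It remains to count. The number of symmetric integer matrices of order $n$ with entries in $[0,n^2]$ is at most $(n^2+1)^{\binom{n}{2}+n}=2^{O(n^2\log n)}$, and for $n=O(d\log d)$ this is $2^{O(d^2\log^2 d\cdot\log d)}=2^{O(d^2\log^3 d)}$. Every maximal element of $\M_d$ is, up to permutation, the slack matrix reconstructed from at least one such $G$, so this bounds their number and gives the theorem. All the difficulty is concentrated in the determining-subset lemma; the reconstruction and the count are routine once $n=O(d\log d)$ is secured.
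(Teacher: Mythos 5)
Your encoding and counting steps are sound, but the proposal hinges entirely on an unproven claim: that every maximal configuration $(A,B)$ with $A\subseteq\{0,1\}^d$ admits a subset $A_0\subseteq A$ of size $O(d\log d)$ with $A_0^\ast=B$. You flag this yourself as the main obstacle, and the machinery you suggest cannot deliver it. Greedy set cover only gives a cover of size $O(\mathrm{OPT}\cdot\log|C|)=O(\mathrm{OPT}\cdot d)$, but there is no a priori bound on $\mathrm{OPT}$ --- bounding it \emph{is} the lemma. Likewise, $\varepsilon$-net arguments only hit ranges of measure at least $\varepsilon$: a bad candidate $b\in C\setminus B$ may be excluded by a single row of $A$ (a range of measure $1/|A|$, which can be exponentially small since $|A|$ can be exponential in $d$), and every such row is forced into any determining subset. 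Nothing you propose rules out configurations with $\omega(d\log d)$ such forced rows, so the central lemma remains completely open; the proof as written establishes only the trivial reduction plus the (correct) Gram-matrix count conditional on it.

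It is instructive that the paper's proof deliberately avoids proving any such statement. It normalizes $B\subseteq\{0,1\}^d$ (Lemma~\ref{lem:01-configuration-B}) and extracts $b_1,\ldots,b_k\in B$ with $k\le d+d\log d$ that generate the \emph{lattice} $\Lambda(B)$, via a determinant-halving argument (the initial determinant is at most $d^d$ by Hadamard, and each new generator divides it by an integer factor at least $2$). Crucially, these generators do \emph{not} satisfy $\{b_1,\ldots,b_k\}^\ast=A$: for $b=\sum_i\lambda_i b_i$ with $\lambda_i\in\Z$, knowing $\ip{a}{b_i}\in\{0,1\}$ only forces $\ip{a}{b}\in\Z$, not $\ip{a}{b}\in\{0,1\}$. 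The paper compensates by pairing the compression map $\zeta(a)=(\ip{a}{b_1},\ldots,\ip{a}{b_k})$ with a second object, the set $A'=\set{a'\in\R^k}{\ip{a'}{b'}\in\{0,1\}\text{ for all }b'\in\phi(B)}\subseteq\{0,1\}^k$, counted by the correlation-cone face argument of Theorem~\ref{thm:second_upper_bound_conf}; then $A=\zeta^{-1}(A')$, and the count is $2^{O(kd)}\cdot 2^{O(k^2\log k)}=2^{O(d^2\log^3 d)}$. In other words, where you try to find a small sub-configuration that determines $B$ \emph{exactly by duality}, the paper settles for a small subset determining $B$ only up to lattice membership and pays for the discrepancy with a separately counted low-dimensional dual set. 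If you want to repair your argument, replacing your covering lemma with this lattice-plus-compression scheme is the natural route; your Gram-matrix encoding could then be applied to $\{b_1,\ldots,b_k\}$, but you would still need the analogue of $A'$ as part of the code.
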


\begin{proof}
Let $S(A,B)$ be the slack matrix of a $2$-level $d$-configuration $(A,B)$. By Lemma~\ref{lem:01-configuration-B}, we may assume that $B\subseteq \{0,1\}^d$ and $A\subseteq\R^d$.

Let us show that there are vectors $b_1,\ldots, b_k \in B$, $k\leq d + d \log d$, such that the lattice $\Lambda(B)$ generated by $B$ equals the lattice $\Lambda(\{b_1,\ldots,b_k\})$ generated by $b_1$, \ldots, $b_k$. Let us start with a set of $d$ linearly independent vectors $C\coloneqq \{b_1,\ldots,b_d\}$ from $B$. Now until $\Lambda(C)=\Lambda(B)$, we iteratively replace $C$ by $C\cup \{b\}$ for some $b\in B\setminus \Lambda(C)$. 

Note that if $\Lambda(C)\neq\Lambda(B)$, then the lattice $\Lambda(C)$ is a proper sublattice of $\Lambda(C\cup \{b\})$ for $b\in B\setminus \Lambda(C)$. Thus the determinant of $\Lambda(C)$ equals the determinant of 
$\Lambda(C\cup \{b\})$ times an integer strictly larger than~$1$. Since $\{b_1,\ldots,b_d\} \subseteq B \subseteq \{0,1\}^d$, the determinant of the initial lattice $\Lambda(\{b_1,\ldots, b_d\})$ is at most $d^d = 2^{d\log d}$. Whenever $\Lambda(C) \neq \Lambda(B)$, the determinant of $\Lambda(C)$ decreases each time by an integer factor larger than or equal to~$2$. Thus, by the time $\Lambda(C)=\Lambda(B)$, the cardinality of $C$ is at most $d + d \log d$.

Now we define maps $\zeta : A \rightarrow \{0,1\}^k$ and $\phi : B\rightarrow \Z^k$ such that 
\[
\ip{a}{b}=\ip{\zeta(a)}{\phi(b)} \quad \text{ for every } a \in A,\ b \in B\,.
\]
For $a \in A$, we let $\zeta(a)\coloneqq (\ip{a}{b_1},\ldots, \ip{a}{b_k})$. For $b \in B$, we let $\phi(b)\coloneqq (\lambda_1,\ldots,\lambda_k)$, where $\lambda_i\in \Z$, $i=1,\ldots,k$ are integer coefficients verifying $\lambda_1 b_1 + \ldots + \lambda_k b_k=b$. For $i = 1, \ldots, k$, we let $\phi(b_i) \coloneqq \ee_i$.

Thus
\[
\ip{\zeta(a)}{\phi(b)} = \sum_{i=1}^k \ip{a}{b_i} \lambda_i = \ip{a}{\sum_{i=1}^k \lambda_i b_i} = \ip{a}{b}\,.
\]

Let $A' \coloneqq \set{a' \in \R^k}{\ip{a'}{b'} \in \{0,1\} \text{ for all } b' \in \phi(B)}$. Then $\zeta(A) \subseteq A' \subseteq \{0,1\}^k$. Now, similarly to the proof of Theorem~\ref{thm:second_upper_bound_conf}, one can show that there are at most $2^{O(k^2\log k)}$ possible $A'$. The map $\zeta$ is uniquely defined by the vectors $b_1, \ldots, b_k\in B\subseteq\{0,1\}^d$. Thus, there are at most $2^{kd}$ possible maps $\zeta$. Note that, by extending $\zeta$ in the obvious way to a map defined on the whole $\R^d$,
\[
A=\set{a\in \R^d}{\zeta(a)\in A'}\,,
\]
showing that there are at most $2^{O(k^2\log k+kd)}$, and so at most $2^{O(d^2\log^3 d)}$, possibilities for $A$.
\end{proof}

\section{Discussion}

Similarly to Theorem~\ref{thm:naive_upper_bound}, one can show that the total number of maximal matrices of rank $d$ with entries in $\{0,1,\ldots,k\}$ (and no repeated row or column) is at most $(k+1)^{d\binom{d+k+1}{k}}$.

This gives an upper bound of $(k+1)^{d\binom{d+k+1}{k}}$ on the number of linear equivalence classes of $d$-cones $K= \set{x \in \R^d}{Ax \geqslant \0} = \cone(V)$ such that $A_i v_j \in \{0,1,\ldots,k\}$ for all $i\in [m]$ and  $j\in [n]$, where $A\in\R^{m\times d}$ has $m$ rows denoted by $A_i$, $i \in [m]$ and $V\subseteq \R^d$ has $n$ vectors denoted by $v_j$, $j \in [n]$.

Also, this gives an upper bound of $(k+1)^{(d+1)\binom{d+k+2}{k}}$ on the number of affine equivalence classes of $d$-polytopes $P = \set{x \in \R^d}{Ax \ge b} = \conv(V)$ such that $A_i v_j-b_i \in \{0,1,\ldots,k\}$ for all $i\in [m]$ and $j\in [n]$, where $A\in\R^{m\times d}$ has $m$ rows denoted by $A_i$, $i \in [m]$ and $V\subseteq \R^d$ has $n$ points denoted by $v_j$, $j \in [n]$.

We would like to point out that there are infinitely many affine equivalence classes of polytopes $P = \set{x \in \R^d}{Ax \geqslant b} = \conv(V)$ such that 
\[
\big|\set{A_i v_j-b_i \in \{0,1,\ldots,k\}}{i\in [m]\text{ and } j\in [n]}\big|\le k
\]
already for $d=2$ and $k=3$. This is due to the observation that every quadrilateral $P\subseteq \R^2$ has a pair of outer and inner descriptions satisfying the above condition for $k=3$. However, there are infinitely many affine equivalence classes of quadrilaterals.

We leave it as an open problem to fill the gap between the lower bound of $2^{\Omega(d^2)}$ and the upper bound of $2^{O(d^2\log d)}$ for the number of affinely inequivalent $2$-level $d$-polytopes and the number of linearly inequivalent $2$-level $d$-cones, and also find better estimates on the number of linear equivalence classes of $2$-level configurations.

\section*{Acknowledgments}

We acknowledge support from ERC grant \emph{FOREFRONT} (grant agreement no. 615640) funded by the European Research Council under the EU's 7th Framework Programme (FP7/2007-2013).

This work was done while the authors were visiting the Simons Institute for the Theory of Computing. It was partially supported by the DIMACS/Simons Collaboration on Bridging Continuous and Discrete Optimization through NSF grant $\#$CCF-1740425.

\bibliographystyle{amsplain}
\bibliography{bibliography}

\end{document}